\def\ov{\overline}
\def\dint{\int\!\!\int}
\def\inte#1{
\displaystyle\mathop{#1\kern0pt}^\circ }
\let\al=\alpha
\let\e=\varepsilon
\let\f=\frac
\let\p=\psi
\def\ga{\gamma}
\def\n{{\mathbf n}}
\def\ve{\varepsilon}
\def\meas{\hbox{meas}\,}
\def\virgp{\raise 2pt\hbox{,}}
\def\cdotpv{\raise 2pt\hbox{;}}
\def\eqdefa{\buildrel\hbox{\footnotesize def}\over =}
\def\C{\mathop{\bf C\kern 0pt}\nolimits}
\def\DD{\mathop{\bf D\kern 0pt}\nolimits}
\def\K{\mathop{\bf K\kern 0pt}\nolimits}
\def\N{\mathop{\bf N\kern 0pt}\nolimits}
\def\Q{\mathop{\bf Q\kern 0pt}\nolimits}
\def\R{\mathop{\mathbb R\kern 0pt}\nolimits}
\def\SS{\mathop{\bf S\kern 0pt}\nolimits}
\def\ZZ{\mathop{\bf Z\kern 0pt}\nolimits}
\def\TT{\mathop{\bf T\kern 0pt}\nolimits}
\newcommand{\la}{\lambda}
\def\nn{\nonumber}
\def\p{\partial}
\newcommand{\beq}{\begin{equation}}
\newcommand{\eeq}{\end{equation}}
\newcommand{\ben}{\begin{eqnarray}}
\newcommand{\een}{\end{eqnarray}}
\newcommand{\beno}{\begin{eqnarray*}}
\newcommand{\eeno}{\end{eqnarray*}}
\newtheorem{thm}{Theorem}[section]
\newtheorem{col}{Corollary}[section]
\newtheorem{prop}{Proposition}[section]
\begin{document}

\title{Singularity and existence to a wave system of nematic liquid crystals}
\author[G. Chen]{Geng Chen}%
\address[G. Chen]
{ Department of Mathematics\\
 The Pennsylvania State
University\\
 University Park, PA 16802, USA} \email{chen@math.psu.edu}
\author[Y. Zheng]{Yuxi Zheng}%
\address [Y. Zheng]
{ Department of Mathematics\\
 The Pennsylvania State
University\\
 University Park, PA 16802, USA}\email{yuz2@psu.edu}
\date{\today}
\maketitle

\begin{abstract}
In this paper, we prove the global existence and singularity formation for a wave system
from modelling nematic liquid crystals in one space dimension. 
In our model, although the viscous damping term is included,  
the solution with smooth initial data still has gradient blowup in general,
even when the initial energy is arbitrarily small. 
\end{abstract}
\bigskip

{\qquad\small{\,Key Words. Liquid crystal; singularity; wave equations.}}
\bigskip

\setcounter{equation}{0}
\section{Introduction}
The mean orientation of the long molecules in a nematic liquid crystal is
described by a director field of unit vectors, ${\mathbf
n}\in{\mathbb S}^2$, and
the propagation of the orientation waves in the director field could be 
modelled by below Euler-Largrangian equations derived from the least action principle \cite{[37],AH}, 
\beq
 {\mathbf
n}_{tt} + \mu {\mathbf
n}_t + \frac{\delta W(\n,\nabla \n)}{\delta \mathbf n} =\lambda {\mathbf
n}, \qquad {\mathbf n}\cdot
{\mathbf n} = 1, \label{1.2} 
\eeq 
where the well-known Oseen-Franck potential
energy density $W$ is given by 
\beq 
 \textstyle
W\left({\mathbf n},\nabla{\mathbf
n}\right) = \frac12\alpha(\nabla\cdot{\mathbf n})^2 +
\frac12\beta\left({\mathbf n}\cdot\nabla\times{\mathbf n}\right)^2
+\frac12\gamma\left|{\mathbf n}\times(\nabla\times{\mathbf
n})\right|^2. \label{1.1a} 
\eeq 
The positive constants $\alpha$,
$\beta$, and $\gamma$ are elastic constants of the liquid crystal,
corresponding to splay, twist, and bend, respectively. 
The viscous coefficient $\mu$ is a non-negative constant and the Lagrangian multiplier $\lambda$ is  determined 
by the constraint $\mathbf n \cdot \mathbf n=1$.

There are many studies
on the constrained elliptic system of equations for ${\mathbf n}$
derived through variational principles from the Oseen-Franck potential \eqref{1.1a}, 
and on the parabolic flow associated with it, see
\cite{[3],[7],[10],[19],[26],[49]}.

The global weak existence and singularity formation for the Cauchy problem 
of the extreme case of (\ref{1.2}) 
with $\mu=0$ in one space dimension (1-d)
has also been extensively studied \cite{BZ,ghz, ZZ03,  ZZ05a, 
ZZ10, ZZ11, CZZ12}. This hyperbolic system describes the model with viscous effects neglected, 
for which 
an example with smooth initial data and singularity formation (gradient blowup) in finite 
time has been provided in \cite{ghz}. The lack of regularity makes us only be able to consider the existence of weak solutions
instead of classical solutions. The existing global existence results in \cite{BZ, ZZ10, ZZ11, CZZ12} proved by the method of 
energy-dependent coordinates will be introduced in Section \ref{section 2}.

However, the well-posedness and regularity of 
the solution for the complete system (\ref{1.2}) with $\mu>0$ are still wide open.

In this paper, we consider the 1-d system of \eqref{1.2} with $\alpha=\beta$ and $\mu\geq0$, 
where the derivation will be delayed to Section \ref{section 2}:
\beq \left\{
\begin{array}{ll}
\p_{tt}n_1+ \mu \p_{t} n_1 -\p_x(c^2(n_1)\p_xn_1)=\bigl(-|\n_t|^2+(2c^2-\ga)|\n_x|^2\bigr)n_1, \\
\p_{tt}n_2+ \mu \p_{t} n_2 - \p_x(c^2(n_1)\p_xn_2)=\bigl(-|\n_t|^2+(2c^2-\al)|\n_x|^2\bigr)n_2,\\
\p_{tt}n_3+ \mu \p_{t} n_3 - \p_x(c^2(n_1)\p_xn_3)=\bigl(-|\n_t|^2+(2c^2-\al)|\n_x|^2\bigr)n_3,
\end{array}\right. \label{1.1} \eeq
where $\mu\geq 0$,
\[
c^2(n_1)\eqdefa \al+(\ga-\al)n_1^2
\] 
and 
\beq\label{n1}
\n\equiv \n(t,x)=(n_1(t,x),n_2(t,x),n_3(t,x))\quad \mbox{and}\quad |\n|=1,\quad\mbox{where} \quad (t,x)\in\mathbb R^+\times\mathbb R.
\eeq
In this paper, we first provide an example with smooth ($C^1$) initial data and singularity formation in finite time.
Then we prove the global weak existence of the Cauchy problem for \eqref{1.1} with initial data
\beq\label{ID} 
n_i(0,x)={n_i}_0\in H^1,\quad
(n_i)_t(0,x)={n_i}_1\in L^2, \quad i=1\sim3.
\eeq

Although when $\mu=0$ the singularity formation and global existence of Cauchy problem 
for \eqref{1.1} has been systematically studied in \cite{ghz} and \cite{CZZ12} respectively,
the extension of these results to the case with $\mu>0$ in this paper is 
important and non-trival. First, the model with 
$\mu=0$ only describes the extreme case in physics which is much less 
happening than the one described by the model with $\mu>0$. Secondly, the comparison 
between our results and the results when $\mu=0$ indicates that the regularity 
of the solutions is basically \emph{not} impacted by adding or removing the viscous term $\mu\n_t$ even when 
$\mu$ is large, which is to some extend unexpected. Furthermore, now most 
existing existence and regularity results for the models with $\mu=0$ can be expected to extend to
the models with $\mu>0$. 

We first consider the regularity of solutions of (\ref{1.1}). 
A nature question is whether the appearance of the viscous term 
$\mu \n_t$ with $\mu>0$ can prevent the 
singularity formation (gradient blowup), which is known existing when $\mu=0$, or not. However, the singularity 
formation result in this paper gives a negative answer to this question: 
For any $\mu\geq 0$, the solution generally includes gradient blowup.  
In fact, for both the singularity formation example in \cite{ghz} ($\mu=0$) and the one in this paper ($\mu>0$),
the initial energy can be arbitrarily small.
As a consequence of this surprising result, we need to consider the weak 
solution instead of the classical solution for system \eqref{1.1}.

For simplicity, in order to get a gradient blowup example, 
we only consider the solution of \eqref{1.1} with structure
$\n=(\cos u(t,x), \sin u(t,x), 0)$ (planar deformation). Clearly $n_3$ is always $0$ 
in the smooth solution if it is vanishing initially. It is easy to get that the system (\ref{1.1}) is equivalent to
\beq\label{uc}
u_{tt}+\mu u_t - c(u){\left(\, c(u) u_x \right)}_x=0, 
\eeq 
when $\sin u\neq 0$, where we still use $c$ to denote the wave speed and
\beq
c^2(u)=\gamma \cos^2 u +\alpha \sin^2 u.
\eeq
It is easy to see that there exist positive constants $C_L$, $C_U$ and $C_D$, such that
\beq\label{sing_bounds}
C_L<c(u)<C_U,\quad |c'(u)|<C_{D}.
\eeq
{\begin{thm}\label{sing}
We consider the Cauchy problem of \eqref{uc} with initial data satisfying
\begin{eqnarray}
u(0,x)   &=&u_0+ \e\, \phi(\frac{x}{\e}) + \e^2\, \eta(\e^{\frac{2}{3}}x),\nonumber\\
u_t(0,x)&=&\left( - c(u(0,x))+\e \right)\, u_x(0,x),\label{id_sing0}
\end{eqnarray}
where $u_0$ is a constant satisfying
\[
c'(u_0)>0,
\]
and two functions $\phi(a)$ and $\eta(a)$ are in $C^1(\mathbb R)$ and satisfy conditions
\beq\label{id_sing1}
\phi(a),\, \eta(a)=0\quad \text{when}\quad a\not\in(-1,1);\quad \phi'(a)<0\quad \text{when}\quad a\in[0,1),
\eeq
and
\beq\label{id_sing2}
-  \phi'(0)>\frac{8\,\mu\, C_U}{ c'(u_0)\, C_L}\,.
\eeq
For any $\mu\geq 0$, we can choose $\e>0$ sufficiently small, 
such that the $C^1$ solution $u(t,x)$ forms singularity in finite time.
\end{thm}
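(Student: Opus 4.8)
The plan is to pass to Riemann invariants, follow a single forward characteristic along which the backward Riemann invariant obeys a Riccati-type differential inequality, and run a continuity argument keeping the coupling terms subordinate until blow-up.

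\textbf{Riemann invariants.} Put $R:=u_t+c(u)u_x$ and $S:=u_t-c(u)u_x$, so that $u_t=\tfrac12(R+S)$ and $c(u)u_x=\tfrac12(R-S)$. A direct computation from \eqref{uc} yields
\[
\begin{aligned}
R_t-c(u)R_x&=-\tfrac{\mu}{2}(R+S)+\tfrac{c'(u)}{4c(u)}(R^2-S^2),\\
S_t+c(u)S_x&=-\tfrac{\mu}{2}(R+S)-\tfrac{c'(u)}{4c(u)}(R^2-S^2),
\end{aligned}
\]
with $\dot u=R$ along forward characteristics $\dot x=c(u)$ and $R$ transported along backward characteristics $\dot x=-c(u)$. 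Since $c'(u_0)>0$, on a fixed neighbourhood $|u-u_0|\le\delta_0$ one has $c'(u)\ge\tfrac12c'(u_0)>0$, so the term $\tfrac{c'}{4c}S^2$ in the $S$-equation is a genuine destabilising Riccati forcing; this is the mechanism of the singularity.

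\textbf{Data and characteristic.} From \eqref{id_sing0}, $R(0,x)=\e\,u_x(0,x)$ and $S(0,x)=\bigl(-2c(u(0,x))+\e\bigr)u_x(0,x)$ with $u_x(0,x)=\phi'(x/\e)+\e^{8/3}\eta'(\e^{2/3}x)$. By \eqref{id_sing1}, for $x\in(0,\e)$ we have $u_x(0,x)\approx\phi'(x/\e)<0$, hence $S(0,x)>0$ there and, for $x$ near $0$, $S(0,x)\approx-2c(u_0)\phi'(0)\ge 2C_L(-\phi'(0))$, while $R(0,x)=O(\e)$; moreover, using \eqref{sing_bounds}, the energy $E(0)=\tfrac12\int(u_t^2+c^2u_x^2)(0,\cdot)\,dx=O(\e)$. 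Fix $x_*\in(0,\e)$ close to $0$, let $\chi$ be the forward characteristic through $x_*$, and set $\mathcal S(t):=S(t,\chi(t))$, $\mathcal R(t):=R(t,\chi(t))$, $\mathcal U(t):=u(t,\chi(t))$, so that $\dot{\mathcal U}=\mathcal R$ and $\mathcal S(0)\ge 2C_L(-\phi'(0))\bigl(1-o(1)\bigr)$. As long as the $C^1$ solution exists, $|\mathcal U-u_0|\le\delta_0$, and $|\mathcal R|$ is small compared with $\mathcal S$, the $S$-equation gives
\[
\dot{\mathcal S}\ \ge\ \frac{c'(u_0)}{8C_U}\,\mathcal S^2-\frac{\mu}{2}\,\mathcal S-(\text{small error}).
\]
Hypothesis \eqref{id_sing2} forces $\mathcal S(0)\ge 2C_L(-\phi'(0))\ge 16\mu C_U/c'(u_0)$, so the quadratic term dominates from the outset; $\mathcal S$ is increasing and, by comparison with $\dot m=\tfrac{c'(u_0)}{16C_U}m^2$, escapes to $+\infty$ at a finite time $T^\sharp\lesssim C_U/(c'(u_0)\mathcal S(0))$. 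Hence $u_x$ (equivalently $u_t$) becomes unbounded along $\chi$ by $T^\sharp$, so the $C^1$ solution cannot persist up to $T^\sharp$: this is the claimed finite-time gradient blow-up, with $E(0)=O(\e)$ arbitrarily small and $\mu$ entering only through \eqref{id_sing2}.

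\textbf{Main obstacle.} The real work is to justify, uniformly on $[0,T^\sharp)$, the two standing assumptions above: that $\mathcal U$ stays in $\{|u-u_0|\le\delta_0\}$ and that $\mathcal R$ stays subordinate to $\mathcal S$ (equivalently, that the coupling $-\tfrac{c'}{4c}S^2$ in the $R$-equation does no harm). This is a continuity/bootstrap argument, and the structural facts to exploit are: (i) $R$ is transported along backward characteristics, while the large values of $S$ live only in a thin strip of $x$-width $\sim\e$ — because along forward characteristics $\dot u=R$ is small, so neighbouring forward characteristics carrying the $\phi$-bump barely separate; (ii) the energy identity for \eqref{uc} is dissipative, so $\|S(t,\cdot)\|_{L^2}^2\lesssim E(0)=O(\e)$ for all $t$; (iii) the precise exponents in \eqref{id_sing0} and the extra slowly varying term $\e^2\eta(\e^{2/3}x)$ are tuned so that the backward characteristics feeding $\chi$ originate in the wide, $O(\e^{8/3})$-amplitude region, which keeps $\mathcal R$ — hence $\dot{\mathcal U}$ along $\chi$ — bounded by a quantity that is $o(1)$ as $\e\to0$ on the interval preceding the blow-up of $\mathcal S$. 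Assembling (i)–(iii) so that all these estimates close simultaneously, and in particular ruling out that $\mathcal U$ exits $\{|u-u_0|\le\delta_0\}$ before $\mathcal S$ has already escaped to $+\infty$, is the technical heart of the proof; the extra margin in \eqref{id_sing2} (beyond the bare $-\phi'(0)>4\mu C_U/c'(u_0)C_L$ needed to beat $-\tfrac\mu2\mathcal S$) is what lets one absorb the remaining error terms as well.
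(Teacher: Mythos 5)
Your proposal correctly identifies the strategy the paper follows (Riemann invariants $R,S$; energy $E(0)=O(\e)$; a Riccati inequality for $S$ along the forward characteristic; ODE comparison), but it stops precisely where the work begins: you explicitly defer the bootstrap ("assembling (i)--(iii) \dots is the technical heart of the proof"), and the sketch of how that bootstrap would close is not the one that actually works. Your item (iii) suggests controlling $\mathcal R$ by tracing backward characteristics into the slowly varying $\e^{8/3}$-amplitude region and using the smallness of the initial data there. This ignores that $R$ is not passively transported: along a backward characteristic it satisfies $R_t-cR_x=\tfrac{c'}{4c}(R^2-S^2)-\tfrac{\mu}{2}(R+S)$, whose source $-\tfrac{c'}{4c}S^2$ actively drives $R$ negative, and this accumulated contribution — not the initial value of $R$ — is what must be bounded. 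The paper handles it by integrating the $R$-equation along a backward characteristic, bounding $\int S^2\,dt$ via the characteristic-triangle energy inequality \eqref{dependence} and $\int S\,dt$ via Cauchy--Schwarz over a strip of width $\lesssim\e^{-2/3}$, to get the a priori bound $R\geq -M_2\e^{1/6}$ under the standing sign assumption; you never establish any such quantitative bound on $R$.

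The second missing ingredient is the mechanism that makes the standing sign assumption self-consistent: the paper proves that $R<0$, $S>0$ is an invariant region on a characteristic trapezoid, by contradiction at a "first time of failure" combined with ODE comparison (using the a priori bound on $R$ to show $-\tfrac{c'}{4c}R^2-\tfrac{\mu}{2}R\geq 0$, hence $S_t+cS_x\geq\tfrac{c'}{4c}S^2-\tfrac{\mu}{2}S$). Your phrase "$\mathcal R$ stays subordinate to $\mathcal S$" gestures at this but does not replace the invariance argument, and without it the Riccati inequality you write for $\dot{\mathcal S}$ is not justified. Finally, the role you assign to $\eta$ is not the paper's: $\eta$ is there so that $u_x(0,\cdot)<0$ (hence $R(0,\cdot)<0$ and $S(0,\cdot)>0$) on the whole strip $x\in[0,\e^{-2/3})$, which is what makes the invariant region wide enough and long-lived enough (paper's Step 4 shows $c'(u)>\tfrac12 c'(u_0)$ up to $t\sim\sigma/\e$, and blow-up occurs at $t=O(1)$) — not primarily to make $\mathcal R$ small.
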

}
Note, in the solutions we constructed, $\sin u\neq 0$ before blowup. Hence, for system \eqref{1.1}, for any $\mu\geq 0$, we can find examples with $C^1$ smooth initial data and
singularity formation in finite time. From Theorem \ref{1.1thm}, we know that the gradient blowup we found is \emph{not} a discontinuity (shock wave).

Then we consider the global existence of weak solution 
for the initial value problem of \eqref{1.1}$\sim$\eqref{ID}. 
Here, the unit vector ${\mathbf n}(t, x)$ with
$(t,x)\in\mathbb R^+\times\mathbb R$
is a  {\em weak solution} to the Cauchy problem \eqref{1.1}$\sim$\eqref{ID} if it satisfies that
{\em
\begin{itemize}
	\item[1.] The equations \eqref{1.1} hold in distributional sense for test functions $\phi\in
		C^1_c(\mathbb R\times\mathbb R)$. The initial data satisfy (\ref{ID})
		in the pointwise sense, and their temporal derivatives hold in  $L^p_{\rm loc}\,$ for $p\in [1,2)\,$.
	\item[2.] $n_i (t,x)$ is locally H\"older continuous with exponent $1/2$, for $i=1\sim3$.   
		 The map $t\mapsto (n_1, n_2, n_3)(t,\cdot)$ is continuously differentiable 
		with values in $L^p_{\rm loc}$, for all $1\leq p<2$. And it is Lipschitz continuous
		under the $L^2$ norm, i.e. \beq\label{1.lip}
		\big\|{n_i}(t,\cdot)-{n_i}(s,\cdot)\big\|_{L^2}
		\leq L\,|t-s|, \quad i=1\sim3,
		\eeq 
		for all $t,s\in\mathbb R^+$.
\end{itemize}
}

The main well-posedness results in this paper are
\begin{thm} \label{1.1thm}
The initial value problem \eqref{1.1}$\sim$\eqref{ID} has a global weak solution $\n(t,x)$ for all $(t, x)\in \mathbb R^+\times {\mathbb R}$, where $\n(t,x)$ satisfies that the energy 
\beq\label{1.5E} 
\mathcal{E}(t) 
\eqdefa  \frac{1}{2}\int
\Big[|\n_t|^2+c^2(n_1)|\n_x|^2\Big] \,dx \eeq
is less than or equal to $\mathcal{E}(0)$ for any $t>0$.
\end{thm}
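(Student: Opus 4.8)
The plan is to construct $\n$ by the method of energy-dependent characteristic coordinates recalled in Section~\ref{section 2}, following \cite{BZ, ZZ10, ZZ11, CZZ12}; the only genuinely new feature compared with the undamped case $\mu=0$ treated in \cite{CZZ12} is the term $\mu\n_t$, and the crux is to verify that after the energy rescaling this term is a bounded, Lipschitz (indeed favourable) perturbation of the transformed system and that it produces exactly the dissipation $-\mu\int|\n_t|^2\,dx$ in the energy balance.

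\emph{Riemann variables and the transformed system.} The three equations in \eqref{1.1} share the single pair of characteristic fields $\tfrac{dx}{dt}=\pm c(n_1)$, so set $R_i:=\p_t n_i+c\,\p_x n_i$ and $S_i:=\p_t n_i-c\,\p_x n_i$ for $i=1\sim3$; then $\p_t n_i=\tfrac12(R_i+S_i)$, $c\,\p_x n_i=\tfrac12(R_i-S_i)$ and $|\n_t|^2+c^2|\n_x|^2=\tfrac12\sum_i(R_i^2+S_i^2)$. Differentiating \eqref{1.1} along the backward, resp.\ forward, characteristics gives
\[
\begin{aligned}
\p_t R_i-c\,\p_x R_i&=\mathcal F_i(\n,R,S)-\tfrac{\mu}{2}(R_i+S_i),\\
\p_t S_i+c\,\p_x S_i&=\mathcal G_i(\n,R,S)-\tfrac{\mu}{2}(R_i+S_i),
\end{aligned}
\]
where $\mathcal F_i,\mathcal G_i$ are the quadratic sources (from $c'(n_1)$ and from the right-hand side of \eqref{1.1}) already present when $\mu=0$. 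One records the linear relations $\sum_i n_iR_i=\sum_i n_iS_i=0$, i.e.\ $\n\cdot\n_t=\n\cdot\n_x=0$; for compatible data ($|\n_0|=1$, $\n_0\cdot\n_1=0$) these propagate and force $|\n|\equiv1$ for the reconstructed solution. Since $\sum_iR_i^2$ and $\sum_iS_i^2$ may blow up pointwise while their $x$-integrals stay bounded by $2\mathcal E(t)$, one desingularizes by introducing characteristic coordinates $(X,Y)$ weighted by $1+\sum_iR_i^2$ and $1+\sum_iS_i^2$ (on $\{t=0\}$ this is $\bar X=\int_0^{\bar x}(1+\sum_iR_i^2(0,\cdot))$, $\bar Y=\int_0^{\bar x}(1+\sum_iS_i^2(0,\cdot))$), by replacing $R_i,S_i$ by the bounded unknowns $w_i:=\arctan R_i$, $z_i:=\arctan S_i$, and by adjoining the energy-fraction variables $\cP:=(1+\sum_iR_i^2)\,\p x/\p X$, $\cQ:=(1+\sum_iS_i^2)\,(-\p x/\p Y)$ and the trivial unknowns $n_i,t,x$, exactly as in \cite{CZZ12}. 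In the variables $(X,Y)$ the problem becomes a semilinear system with right-hand sides smooth on bounded sets: each crossed term $S_j^2/(1+R_i^2)$ in $\mathcal F_i$ (resp.\ $R_j^2/(1+S_i^2)$ in $\mathcal G_i$) is converted by the Jacobian into $(\text{bounded})\cdot\cQ$ (resp.\ $(\text{bounded})\cdot\cP$), while the damping terms, after division by $1+R_i^2$ (resp.\ $1+S_i^2$) and the same substitution, become $-\mu\,R_i/(1+R_i^2)$ plus $(\text{bounded})\cdot\cQ$-type contributions, all controlled because $|R_i|/(1+R_i^2)\le\tfrac12$. A contraction mapping gives a unique local solution, and the a priori bounds $\cP,\cQ\ge0$, $|w_i|,|z_i|\le\tfrac{\pi}{2}$, together with $\int(\cP+\cQ)$ controlled by the (non-increasing) total energy, extend it to all $(X,Y)\in\mathbb R^2$.

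\emph{Return to physical variables and the energy balance.} From the global $(X,Y)$-solution one recovers $t=t(X,Y)$ and $x=x(X,Y)$ by integrating $\p_Xt,\p_Yt,\p_Xx,\p_Yx$ (explicit in $c,\cP,\cQ,w_i,z_i$), checks that $t$ is proper and $(X,Y)\mapsto(t,x)$ is onto $\mathbb R^+\times\mathbb R$, and thereby defines $n_i(t,x)$ and $R_i,S_i$ a.e. On each level set $\{t=\text{const}\}$ this gives $\p_tn_i,\p_xn_i\in L^2$ with $\int(|\n_t|^2+c^2|\n_x|^2)\,dx\le\mathcal E(0)$; the H\"older-$\tfrac12$ continuity of $n_i$, the $C^1$ dependence of $t\mapsto\n(t,\cdot)$ into $L^p_{\rm loc}$ for $1\le p<2$, and \eqref{1.lip} then follow from Cauchy--Schwarz against this bound, and a direct computation from the transformed equations shows that \eqref{1.1} holds in $\mathcal D'(\mathbb R\times\mathbb R)$ against $C^1_c$ test functions, that the data \eqref{ID} are attained, and that $|\n|\equiv1$. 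For smooth solutions one has $\tfrac{d}{dt}\mathcal E(t)=-\mu\int|\n_t|^2\,dx\le0$ (the remaining source terms cancel on using $\n\cdot\n_t=0$ and the explicit form of $c(n_1)$); read in the energy coordinates, where $\mathcal E(t)$ is, up to a fixed constant, the mass of $\cP+\cQ$ over the strip $\{t(X,Y)=t\}$, the damping contributes a manifestly nonpositive amount and the gradient-blowup set carries no atom of energy, so $\mathcal E(t)\le\mathcal E(0)$ for all $t>0$.

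The main obstacle is the coupling of these last two points: one must check that the damping term, which a priori involves the unbounded combinations $R_i+S_i$, is genuinely tamed by the energy rescaling so that global solvability in $(X,Y)$ survives, and that the reconstructed field is an honest distributional solution across the gradient-blowup points carrying precisely the dissipated energy --- i.e.\ that introducing $\mu\n_t$ creates neither new energy concentration nor an obstruction to the weak formulation. Everything else is a routine adaptation of the $\mu=0$ theory.
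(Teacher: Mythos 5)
Your overall strategy --- Riemann variables along the common characteristic fields, an energy-weighted change of variables, a semilinear system in $(X,Y)$, contraction mapping plus a priori bounds, inverse transformation, and a $1$-form argument for $\mathcal E(t)\le\mathcal E(0)$ with the damping giving a sign-definite contribution --- is the same as the paper's. The step that fails is the choice of bounded unknowns.

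You propose to replace the Riemann variables component-wise by $w_i:=\arctan R_i$ and $z_i:=\arctan S_i$. That substitution works for the \emph{scalar} Bressan--Zheng equation, but it does not desingularize the \emph{vector} system \eqref{1.1}. The source term in the equation for $R_i$ contains $|\vec R|^2 n_i=\bigl(\sum_j R_j^2\bigr)n_i$ and $\vec R\cdot\vec S\,n_i$, not merely expressions in $R_i$ alone. Passing to $\partial_Y w_i$ divides by $(1+R_i^2)$ from the chain rule and by $(1+|\vec S|^2)$ from the Jacobian, so a contribution proportional to
\[
q\cdot\frac{|\vec R|^2\,n_i}{(1+R_i^2)(1+|\vec S|^2)}
\]
appears, and $|\vec R|^2/(1+R_i^2)$ is unbounded whenever some other component $R_j$, $j\neq i$, is large while $R_i$ and $\vec S$ stay moderate. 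In your unknowns this is $q\cdot\sum_j\tan^2 w_j/(1+\tan^2 w_i)\cdot(\text{bounded})$, which blows up as $w_j\to\pm\pi/2$ for $j\ne i$. So the transformed right-hand side is not locally Lipschitz (nor even locally bounded) on the invariant set $\{|w_i|\le\pi/2,\ |z_i|\le\pi/2\}$; the contraction argument does not close, and the a priori bounds you list ($\cP,\cQ\ge0$, $|w_i|,|z_i|\le\pi/2$, $\int(\cP+\cQ)$ controlled) do not suffice to continue the local solution globally.

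The paper avoids this by using the vector-consistent rational parameterization $\vec\ell:=\vec R/(1+|\vec R|^2)$, $\vec m:=\vec S/(1+|\vec S|^2)$, $h_1:=1/(1+|\vec R|^2)$, $h_2:=1/(1+|\vec S|^2)$ of \eqref{2.11}. The propagated identity $|\vec\ell|^2+h_1^2=h_1$ (Proposition~\ref{prop2.1}) forces $h_1,h_2\in[0,1]$ and $|\vec\ell|,|\vec m|\le\tfrac12$, and then in \eqref{2.19} every coefficient of $p,q$ is manifestly bounded: the troublesome $|\vec R|^2 n_i$ term becomes a bounded combination of $h_1+h_2-2h_1h_2$ and $\vec\ell\cdot\vec m$. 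This is not a cosmetic variant of the arctangent trick; it is forced by the fact that the quadratic sources see the full $|\vec R|^2$, $|\vec S|^2$, $\vec R\cdot\vec S$, and the cancellation that tames them occurs only after contracting with $\vec R$ or $\vec S$ (using $\vec R\cdot\n=\vec S\cdot\n=0$), not component by component. If you swap your $w_i,z_i$ for $(\vec\ell,\vec m,h_1,h_2)$ the rest of your plan goes through: the damping alters $p_Y+q_X$ to $-\tfrac{\mu pq h_1h_2}{2c}\bigl|\tfrac{\vec\ell}{h_1}+\tfrac{\vec m}{h_2}\bigr|^2\le0$, and the energy $1$-form satisfies the dissipation identity \eqref{5.3b}, which is what gives $\mathcal E(t)\le\mathcal E(0)$ after integrating over the region $\{0\le t(X,Y)\le\tau\}$.
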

\begin{thm}\label{cd}
For the Cauchy problem \eqref{1.1}$\sim$\eqref{n1},
let a sequence of initial data satisfy 
\beno
\big\|({n_i}_0^k)_x- ({n_i}_0)_x\big\|_{L^2}  \to 0,\qquad
 \big\|{n_i}_1^k- {n_i}_1\big\|_{L^2}  \to
0\,,\quad i=1\sim3,
\eeno 
and ${\mathbf n}^k_0 \to {\mathbf n}_0$ uniformly on compact sets,
as $k\to \infty$. Then ${\mathbf n}^k\to {\mathbf n}$ uniformly on bounded subsets of
the $(t,x)$-plane with $t>0$.
\end{thm}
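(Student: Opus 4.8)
The solutions in Theorem~\ref{1.1thm} are constructed (as carried out in Section~\ref{section 2}) by passing to energy-dependent characteristic coordinates $(X,Y)$ in which \eqref{1.1} becomes a semilinear system with bounded coefficients, and Theorem~\ref{cd} is the corresponding stability statement, so $\n$ and $\n^k$ denote those particular solutions. The plan is: push the convergence of the data into the $(X,Y)$-coordinates; invoke Lipschitz stability of the semilinear system there; transfer the conclusion back to the $(t,x)$-plane. The uniform facts along the sequence are $|\n^k|\equiv1$, that $\mathcal{E}^k(0)$ is bounded (indeed $\mathcal{E}^k(0)\to\mathcal{E}(0)$) with $\mathcal{E}^k(t)\le\mathcal{E}^k(0)$ by Theorem~\ref{1.1thm}, and the two-sided bounds \eqref{sing_bounds}. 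Hence $\n^k(t,\cdot)$ carries a local Hölder-$\tfrac12$ bound uniform in $k$ and in $t$ on bounded intervals (constant controlled by $\|\n^k_x(t,\cdot)\|_{L^2}^2\le 2\mathcal{E}^k(0)/C_L^2$), and by \eqref{1.lip} a uniform Lipschitz-in-$t$ bound into $L^2$; the elementary interpolation $|\n^k(t,x)-\n^k(s,x)|\le 2H\delta^{1/2}+\delta^{-1/2}\|\n^k(t,\cdot)-\n^k(s,\cdot)\|_{L^2}$, minimized in $\delta>0$, upgrades this to a uniform local $C^{1/2}$ bound jointly in $(t,x)$. Thus $\{\n^k\}$ is bounded and equicontinuous, hence precompact in $C^0_{\rm loc}(\mathbb{R}^+\times\mathbb{R})$.

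In the transformed problem the unknowns are $(t,x,\n)$ together with the forward/backward characteristic densities and the rescaled Riemann-type variables built from $\n_t\pm c\n_x$, and their initial traces are explicit functionals of $\n_0^k$, $\n_1^k$ and of the antiderivative $x\mapsto\int_{-\infty}^x e_0^k(y)\,dy$ of the initial energy density $e_0^k=\tfrac12\big(|\n_1^k|^2+c^2({n_1}_0^k)\,|(\n_0^k)_x|^2\big)$. From the three hypotheses of Theorem~\ref{cd} one has $e_0^k\to e_0$ in $L^1$ (by uniform integrability), so these antiderivatives converge uniformly on compact sets; inverting the monotone changes of variable then gives convergence of the coordinate maps and their (pseudo-)inverses, hence of the transformed initial traces, in $L^\infty_{\rm loc}$ (and in $C^0_{\rm loc}$ for the $(t,x,\n)$ part). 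On the range fixed by \eqref{sing_bounds} and the uniform energy bound the transformed system has the semilinear form $\partial_X(\,\cdot\,)=F(\,\cdot\,)$, $\partial_Y(\,\cdot\,)=G(\,\cdot\,)$ with $F,G$ smooth and globally Lipschitz — the damping $\mu\n_t$ appearing only as one more bounded lower-order term, so $\mu\ge0$ plays no structural role — and a Gronwall estimate along the two characteristic families yields Lipschitz dependence of the transformed solution on its initial traces, in $L^\infty$ on every rectangle $[0,X_*]\times[0,Y_*]$. Combining, all transformed components, in particular $t^k(X,Y)$, $x^k(X,Y)$ and $\n^k(X,Y)$, converge uniformly on bounded subsets of the $(X,Y)$-plane to those of the limit solution; a box $\{0\le t\le T,\ |x|\le M\}$ pulls back to a bounded $(X,Y)$-set because the energy coordinates increase by at most the finite total energy.

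By precompactness it suffices to show that every locally uniform limit point $\mathbf{m}$ of $\{\n^k\}$ equals $\n$. Let $\n^{k_j}\to\mathbf{m}$ locally uniformly and fix $(X,Y)$. By the previous paragraph the points $p_j:=(t^{k_j}(X,Y),x^{k_j}(X,Y))$ converge to $p_*:=(t(X,Y),x(X,Y))$, and the $(X,Y)$-representations satisfy $\n^{k_j}(X,Y)\to\n(X,Y)$, the latter being the value of $\n$ at $p_*$. Since the $\n^{k_j}$ share a common local modulus of continuity, $|\n^{k_j}(p_j)-\mathbf{m}(p_*)|\le|\n^{k_j}(p_j)-\n^{k_j}(p_*)|+|\n^{k_j}(p_*)-\mathbf{m}(p_*)|\to0$, so $\mathbf{m}(p_*)=\n(p_*)$. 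Because $(X,Y)\mapsto(t(X,Y),x(X,Y))$ is onto $\mathbb{R}^+\times\mathbb{R}$ (every space-time point lies on a forward and a backward characteristic), $\mathbf{m}\equiv\n$; hence the whole sequence converges locally uniformly, in particular uniformly on bounded subsets of $\{t>0\}$.

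The genuinely delicate point is this last passage. The coordinate map $(X,Y)\mapsto(t,x)$ is only Lipschitz and degenerates precisely where the energy concentrates, and distinct solutions live in distinct coordinate systems, so one cannot invert the transformation directly; the argument circumvents this by letting the equicontinuity of $\{\n^k\}$ carry the $(X,Y)$-convergence to the physical side along the converging characteristic positions $p_j\to p_*$, with surjectivity of the coordinate map covering all of space-time. What remains is the bookkeeping of the first two steps — checking that the transformed data and the transformed solutions converge in the precise topology for which the semilinear system is well posed — and this is where the uniform energy bound and \eqref{sing_bounds} are used.
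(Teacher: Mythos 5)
The paper does not actually give a dedicated proof of Theorem~\ref{cd}; it states it and refers to Sections~6--8, where the only explicit ingredient is Corollary~\ref{col1} (convergence in the $(X,Y)$-plane, under the \emph{stronger} hypothesis that the derivatives of the data converge uniformly on compacts, not merely in $L^2$). Your proposal fills both gaps — weakening the hypothesis on the initial data, and transferring the conclusion from $\Omega^+$ back to the $(t,x)$-plane — and the strategy is exactly the one the paper implicitly relies on (and that Bressan--Zheng carry out in the scalar case). The transfer step, using uniform Hölder equicontinuity of $\{\n^k\}$ together with the uniform convergence of $t^k(X,Y),x^k(X,Y)$ and surjectivity of the coordinate map, is correct and is indeed the genuinely delicate point; your telescoping estimate at $p_j\to p_*$ handles it properly.

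One technical assertion deserves a caveat. You claim that the transformed initial traces converge in $L^\infty_{\rm loc}$. That is true for the $(t,x,\n)$ and $(p,q)$ components (the latter identically $1$), but the traces $\bar h_1,\bar h_2,\bar{\vec\ell},\bar{\vec m}$ are pointwise nonlinear functions of $\vec R(0,\cdot)$ and $\vec S(0,\cdot)$, which under the hypotheses of Theorem~\ref{cd} converge only in $L^2$. Being uniformly bounded, those traces therefore converge in measure (equivalently in $L^p_{\rm loc}$ for all $p<\infty$), but \emph{not} in $L^\infty_{\rm loc}$. The Gronwall/fixed-point stability of the Goursat problem \eqref{2.19} must consequently be run with the trace data measured in this weaker topology: one uses that the right-hand sides are bounded on bounded subsets of $\Omega^+$ uniformly in $k$ (since the uniform energy bound controls $p^k,q^k$ via \eqref{1pq}--\eqref{1q}) and that $\n^k,t^k,x^k$ have $L^\infty$-convergent traces and bounded derivatives in both $X$ and $Y$, so their convergence on bounded sets follows by dominated convergence in the integral equations; the remaining components converge a.e.\ along the appropriate characteristic family, which is all the transfer argument actually needs. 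Also, "globally Lipschitz" for $F,G$ should read "Lipschitz on bounded subsets of $\Omega^+$, uniformly in $k$," since $p,q$ grow with $|X|+|Y|$. With these adjustments the argument goes through as you intend.
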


In order to prove these theorems, by the method
of energy-dependent coordinates,  first used in
papers \cite{BZ} and related Camassa-Holm equation, we dilate
the singularity then find the energy dissipated solution. In fact, 
the energy is dissipated when $\mu<0$. When $\mu=0$, the energy is
conserved in some sense \cite{CZZ12}. 

Similar existence and continuous dependence results apply to system \eqref{uc}, 
which is a special example of \eqref{1.1}.

The paper is organized as follows.  In
Section 2, we derive the system \eqref{1.1} and energy equation, then introduce the 
existing results for the extreme case $\mu=0$. 
In Section 3, we prove the singularity formation result: Theorem \ref{sing}. 
In Section 4, we construct a set of semi-linear equations in the energy-dependent variables by studying 
the smooth solution of \eqref{1.1}. Then, in Section 5, we prove the existence of the solutions for the semi-linear system. 
In Sections 6$\sim$8, we transform the solutions back to the original system and prove Theorems \ref{1.1thm}$\sim$\ref{cd}.
\section{The derivation of system}\label{section 2}
\setcounter{equation}{0}
In this section, we derive (\ref{1.1}) from (\ref{1.2}).
In one space dimension, \eqref{1.2} is
\beq
\p_{tt}n_i + \mu\p_{t}n_i+\p_{n_i}W(\n,\p_x\n)-\p_x\bigl[\p_{\p_xn_i}W(\n,\p_x\n)\bigr]=\la
n_i,\quad \mbox{for}\quad i=1,2,3. \label{1.4} 
\eeq 
Using
$|\n|=1,$ we get by multiplying $n_i$  to \eqref{1.4} and summing up
$i$ from $1$ to $3$ \beq
\la=\sum_{i=1}^3\Bigl\{-|\p_tn_i|^2+n_i\p_{n_i}W(\n,\p_x\n)-n_i\p_x\bigl[\p_{\p_xn_i}W(\n,\p_x\n)\bigr]\Bigr\}.
\label{1.5} \eeq 
It is easy to calculate that \eqref{1.4} has the
following energy equations: \beq \label{1.4a}
\p_t\bigl[\f12|\n_t|^2+W(\n,\p_x\n)\bigr]-\p_x\bigl[\sum_{i=1}^3\p_tn_i\p_{\p_xn_i}W(\n,\p_x\n)\bigr]=-{\mu} |\n_t|^2\leq0.
\eeq
In 1-d case, the Oseen-Franck potential energy density \eqref{1.1a} is
\beq\label{1.6}
W(\n,\p_x\n)=\f{\al}2(\p_xn_1)^2+\f{\beta}2\bigl[(\p_xn_2)^2+(\p_xn_3)^2\bigr]+\f12(\ga-\beta)n_1^2|\p_x\n|^2,
\eeq from which and \eqref{1.5}, we infer \beq\label{1.7}
\la=-|\n_t|^2+\bigl(\beta+2(\ga-\beta)n_1^2\bigr)|\p_x\n|^2+(\beta-\al)n_1\p_x^2n_1.
\eeq Then by \eqref{1.4}, \eqref{1.6} and \eqref{1.7}, we
have
\beq\label{1.8}
\left\{
\begin{split}
&\p_{tt}n_1+ \mu\p_{t}n_1-\p_x\bigl[c_1^2(n_1)\p_xn_1\bigr]=
\bigl\{-|\n_t|^2+(2c_2^2-\ga)|\n_x|^2+2(\al-\beta)(\p_xn_1)^2\bigr\}n_1\\
&\p_{tt}n_2+\mu\p_{t}n_2-\p_x\bigl[c_2^2(n_1)\p_xn_2\bigr]=
\bigl\{-|\n_t|^2+(2c_2^2-\beta)|\n_x|^2+(\beta-\al)n_1\p_{xx}n_1\bigr\}n_2\\
&\p_{tt}n_3+\mu\p_{t}n_3-\p_x\bigl[c_2^2(n_1)\p_xn_3\bigr]=
\bigl\{-|\n_t|^2+(2c_2^2-\beta)|\n_x|^2+(\beta-\al)n_1\p_{xx}n_1\bigr\}n_3
\end{split}
\right.
\eeq 
with 
\beno c_1^2(n_1)\eqdefa
\al+(\ga-\al)n_1^2\quad\mbox{and}\quad
c_2^2(n_1)\eqdefa\beta+(\ga-\beta)n_1^2. \eeno

In particular, taking $\al=\beta$ in (\ref{1.8}), 
we get \eqref{1.1} and
\beq
c^2(n_1)=c^2_1(n_1)=c^2_2(n_1)=\al+(\ga-\al)n_1^2,
\eeq
and it is easy to check that there are positive numbers $C_L<C_U$ such that
\beq\label{1.0c} 0< C_L < c(n_1) < C_U < \infty; \quad
|c'(n_1)| < C_N,\quad \text{for\ any}\quad |n_1|\leq1.  \eeq
The energy equation \eqref{1.4a} is 
\beq \label{1.4a2}
\f12\p_t\bigl[|\n_t|^2+c^2(n_1)|\n_x|^2]-\p_x\bigl[c^2(n_1)\n_t\cdot\n_x]=-\mu |\n_t|^2\leq0,
\eeq
where the energy density $W$ in \eqref{1.6} is
\beq
W({\mathbf n}, \partial_x {\mathbf n})=\frac{1}{2}c^2(n_1)|\partial_x {\mathbf n}|^2.
\eeq 

Finally, we introduce the existing existence results for weak solutions of 1-d
systems of \eqref{1.2} with $\mu=0$, proved by the method
of energy-dependent coordinates. This method was first applied to equation \eqref{uc} with $\mu=0$ for an existence proof by
\cite{BZ}. When $\n$ is arbitrary in $\mathbb S^2$ and $\mu=0$, under the a priori assumption that $\n(x,t)$ is uniformly away from $(1,0,0)$, the existences of 1-d solutions for (\ref{1.1}) and (\ref{1.8}) with $\beta<\alpha$ have been provided by \cite{ZZ10} and \cite{ZZ11}, respectively. In a recent paper  \cite{CZZ12}, the existence of 1-d solution for system \eqref{1.1} and $\mu=0$ has been established, without any a priori assumption.
\section{Singularity formation: Proof of Theorem \ref{sing}}
\setcounter{equation}{0}
We introduce Riemann variables $R$ and $S$ for equation \eqref{uc}:
\beq
R\eqdefa u_t + c(u)\, u_x,\quad S\eqdefa u_t - c(u)\, u_x.
\eeq
By \eqref{uc}, the $C^1$ solution satisfies
\ben
R_t-c R_x &=&\frac{c'(u)}{4c(u)} (R^2-S^2)-\frac{\mu}{2}(R+S),\label{R_sing}\\
S_t+c S_x &=&\frac{c'(u)}{4c(u)} (S^2-R^2)-\frac{\mu}{2}(R+S),\label{S_sing}
\een
where the left hand sides
are two directional derivatives along characteristics. 

Before we give the proof, we first explain the basic idea. 
The singularity is essentially caused
by the quadratic increase of the Riemann variables. In order to find it out, we need to get a
decoupled Riccati type inequality on some characteristics. 
First, we show that the energy is in $O(\e)$ for our example. 
Using this estimate, we can make sure that $c'(u)>\frac12 c'(u_0)$ before $t=\sigma/\e$ for some positive constant $\sigma$,
when $\e>0$ is sufficiently small. This means that the coefficients of the quadratic terms in  (\ref{R_sing})(\ref{S_sing}) 
have fixed sign. Then the key step of this proof is to prove the domain: $0\geq R>-M_2 \e^{\frac16},\ S>0$ is invariant, 
when $(t,x)$ is taken on some characteristic trapezoid. Collecting all these informations, we can find the decoupled Riccati type inequality then prove the blowup.

We will prove the Theorem \ref{sing} in several steps.\bigskip

\paragraph{\bf{1}}
It is easy to get from the initial condition that
\ben
u_x(0,x)&=&\phi'(\frac{x}{\e}) + \e^{\frac{8}{3}}\, \eta'(\e^{\frac{2}{3}}x),\\
R(0,x)&=&\e\, u_x(0,x),\label{R_0_sing}\\
S(0,x)&=&\left( - 2\,c(u(0,x))+\e \right)\, u_x(0,x).\label{S_0_sing}
\een
Choosing $\e<C_L$ and $0<\e\ll 1$, we have, when $x\in[0, \e^{-\frac{2}{3}})$
\beq\label{RSIC_sing}
u_x(0,x)<0, \quad R(0,x)<0, \quad S(0,x)>0,
\eeq
because $\phi'(a)$ and $\eta'(a)$ are both negative when $a\in[0,1)$.  

\bigskip
\paragraph{\bf{2}}
By \eqref{R_sing} and \eqref{S_sing}, we have the energy equation
\beq\label{E_ineq}
(R^2+S^2)_t +(c(S^2-R^2))_x=-\mu(S+R)^2\leq 0.
\eeq
This equation agrees with the energy equations \eqref{1.4a} and \eqref{1.4a2}.
We define the energy function $E$ as
\beq
E(t)\equiv E(u(\cdot, t))=\int_{-\infty}^{\infty}u^2_t(x, t)+c^2(u(x, t))u^2_x(x, t)\,dx
=\frac12\int_{-\infty}^{\infty} R^2(t, x)+S^2(t, x)\, dx.
\eeq
Here we use $E$ to denote energy for equation (\ref{uc}) instead of $\mathcal E$ for \eqref{1.1} to avoid confusion.
Integrate \eqref{E_ineq} with respect to $x$, then we have
\beno
E(t)&\leq& E(0)\\
	& =  & \frac12\int_{-\infty}^{\infty} R^2(0,x)+S^2(0,x)\, dx\\
	& =  & \frac12\int_{-\infty}^{\infty} \big[\,\left( - 2\,c(u(0,x))+\e \right)^2+\e^2\,\big]\, 
	\big(\,\phi'(\frac{x}{\e}) + \e^{\frac{8}{3}}\, \eta'(\e^{\frac{2}{3}}x)\,\big)^2 \, dx\\
	&\leq& M\e,
\eeno
for some positive constant $M$, where we use $0<\e\ll1$.

\begin{figure}[htb]
\centering
\includegraphics[width=7cm,height=4cm]{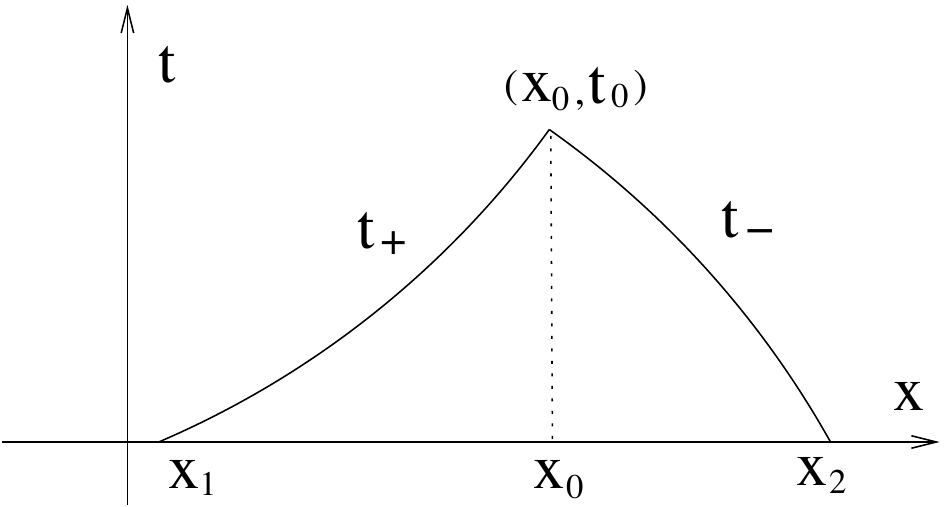}
\caption{Characteristic triangle}
\label{f0}
\end{figure}

\bigskip
\paragraph{\bf{3}}
We consider any characteristic triangle in Figure \ref{f0} 
with the characteristic boundaries $t_{\pm}$ denoted by 
\[
\frac{dt_{\pm}(x)}{dx}=\pm \frac{1}{c(u)}.
\]
Integrating \eqref{E_ineq} on this characteristic triangle, 
and by the divergence theorem, we have
\beq\label{dependence}
\int_{x_1}^{x_0}R^2(t_{+}(x),x)\,dx+\int_{x_0}^{x_2}S^2(t_{-}(x),x)\,dx\leq
\frac12 \int_{x_1}^{x_2} \left[R^2(0,x)+S^2(0,x)\right]\,dx.
\eeq

\bigskip
\paragraph{\bf{4}}
Then we control the sign of $c'(u)$ by choosing $\e$ small enough. Since equation \eqref{uc} has finite propagation speed, 
clearly seen from \eqref{dependence}, we get 
that $u=u_0$ in the region $x<-\e^{-\frac2 3}-C_U t$ and $x>\e^{-\frac2 3}+C_U t$.
\ben
|u(t,x)-u_0|&=&|\int_{-\infty}^x u_x(t,x)\, dx|\nn\\
&\leq& \int_{-\e^{-\frac2 3}-C_U t}^{\e^{-\frac2 3}+C_U t}
|u_x(t,x)| \, dx\nn\\
&\leq& \|u_x \|_{L^2} \, \sqrt{2\e^{-\frac{2}{3}}+2\, C_U t}\nn\\
&\leq& \frac{\sqrt{M}}{C_L}\sqrt{2\e^{\frac{1}{3}}+2\, C_U\, t\,\e},
\een
where we use the bound on $E(t)$ in part 2.
Hence, we can find a small positive number $\sigma$ independent of $\e$ then another small positive number $\e_0$, such that,
\beq\label{cd_sign}
c'(u(x,t))>\frac{c'(u_0)}2>0,
\eeq
if 
\[
0\leq t <\sigma/\e,\quad
0<\e<\e_0.
\]
We only consider the problem in this time interval, in which \eqref{cd_sign} is satisfied.

\bigskip
\paragraph{\bf{5}}
We next prove an a priori estimate under the assumption that $R\leq0$ and $S\geq0$ in the characteristic triangle in Figure \ref{f0}, with $x_2-x_1<\e^{-\frac{2}{3}}$. 

Under this assumption, we have
\[
R_t-c R_x \geq - \frac{C_D}{4C_L}S^2-\frac{\mu}2 S.
\]
Integrating it along the backward characteristic by \eqref{dependence}, we have
\ben
R(t_0,x_0)&\geq& - \frac{C_D}{4C_L}\int_0^{t_0} S^2(t,x_{-}(t))dt-\frac{\mu}2 \int_0^{t_0} S(t,x_{-}(t))dt+R(0, x_1)\nn\\
&=&-\frac{C_D}{4C_L}\int_{x_0}^{x_2} \frac{S^2(t_{-}(x),x)}{c(t_{-}(x),x)}dx-\frac{\mu}2
\int_{x_0}^{x_2} \frac{S(t_{-}(x),x)}{c(t_{-}(x),x)}dx +R(0, x_1)\nn\\
&\geq& -\frac{C_D}{4C_L^2}E(0)-\frac{\mu}{2C_L}\sqrt{E(0)}\sqrt{|x_2-x_0|}+R(0, x_1)\nn\\
&\geq& -M_2 \e^{\frac1 6},\nn
\een
for some positive constant $M_2$.
Choosing $\e<(\frac{2\mu C_L}{M_2 C_N})^6$, we have 
\[ 
-\frac{c'(u)}{4c(u)} R^2-\frac{\mu}{2} R\geq0
\] 
because $R\leq 0$ under the a priori assumption we assumed.
So
\[
S_t+c S_x \geq \frac{c'(u)}{4c(u)} S^2-\frac{\mu}{2} S.
\]
\bigskip
\paragraph{\bf{6}}
Then we use this a priori estimate to prove that $R<0$ and $S>0$ in the closed region bounded by
the forward characteristic starting from the origin on $(t,x)$-plane, 
$t=\sigma /\e$, $t=0$ and the backward characteristic starting form the point $(0,\, \e^{-\frac{2}{3}}-\e^2)$ on $(t,x)$-plane. 
Here, we choose $\e^{-\frac{2}{3}}-\e^2$ instead of  $\e^{-\frac{2}{3}}$ to exclude the backward characteristic 
starting from the point $(0,\, \e^{-\frac{2}{3}})$. 

We prove it by contradiction. Assume that 
$R=0$ or $S=0$ at some point in the region we consider. By \eqref{RSIC_sing}, $R(0,x)<0$ and $S(0,x)>0$ when $x\in[0,\, \e^{-\frac{2}{3}}]$, so we can find the lowest line $t=t_1>0$ such that $R=0$ or 
$S=0$ on some point of this line, while $R<0$ and $S>0$ when $0\leq t<t_1$. So we can find a closed
characteristic triangle with $R=0$ or $S=0$ on the upper vertex while $R<0$ and $S>0$ elsewhere. Then we derive a contradiction by proving that $R$ and $S$ cannot be zero on the upper vertex.

Since $S\geq 0$ in the characteristic triangle,
\[
R_t-c R_x \leq\frac{c'(u)}{4c(u)} R^2-\frac{\mu}{2} R.
\]
Then by standard ODE comparison theorem (see \cite{Mc} for reference), \eqref{sing_bounds} and $R<0$ except the 
upper vertex, we have $R<0$ on the vertex.
By $S\geq0$ and $R\leq0$ in the characteristic triangle, the a priori estimate in the previous part shows that
\[
S_t+c S_x \geq \frac{c'(u)}{4c(u)} S^2-\frac{\mu}{2} S.
\]
Then by ODE comparison theorem, \eqref{sing_bounds} and $S>0$ except the upper vertex, we have $S>0$ on the vertex.
Hence, we get a contradiction. 
\bigskip
\paragraph{\bf{7}}
Finally, we prove the singularity formation by considering the forward characteristic starting from the origin.
We have already proved that along this characteristic, before 
$t=\frac\sigma \e$ and $t=\frac{\e^{-\frac{2}{3}}-\e^2}{2C_U}$,
\beq\label{sing_final}
S_t+c S_x \geq \frac{c'(u)}{4c(u)} S^2-\frac{\mu}{2} S\geq \frac{c'(u_0)}{8C_U} S^2-\frac{\mu}{2} S\geq \frac{c'(u_0)}{16C_U} S^2-\frac{\mu}{2} S.
\eeq

When $\e<C_L$, \eqref{S_0_sing} and the initial conditions \eqref{id_sing1} and \eqref{id_sing2} give that 
\beq\label{s00}
-C_U \phi'(0)>S(0,0)=-c\bigl(u(0,0)\bigr)\phi'(0)>\frac{8\mu C_U}{c'(u_0)}
\eeq
which shows
\[
\frac{c'(u_0)}{16C_U} S^2(0,0)>\frac{\mu}{2} S(0,0).
\]
By the ODE comparison theorem, on the characteristic, \eqref{sing_final} gives
\[
S(t,x)>\frac{8\mu C_U}{c'(u_0)},
\]
i.e.
\[
\frac{c'(u_0)}{16C_U} S^2(t,x)>\frac{\mu}{2} S(t,x).
\]
Hence, by \eqref{sing_final},
\[
S_t+c S_x \geq \frac{c'(u_0)}{16C_U} S^2.
\]
So $S$ blowups before $t=\frac{16C_U}{c'(u_0) S(0,0)}$ which is in $O(1)$ by \eqref{s00}. 
We complete the proof of Theorem \ref{sing}.
\section{Systems in energy-dependent coordinates \label{section_2}}
\setcounter{equation}{0}
In this section, by restricting our consideration on smooth solutions,
we derive a semi-linear system on new coordinates. 
In the next two sections, we construct the solution for the new semi-linear system, 
then after the reverse transformation we show that this solution is also a weak solution 
of the original system. 
\subsection{Energy-dependent coordinates}
We denote 
\beq 
\vec{R}=(R_1,R_2,R_3)\eqdefa
\n_t+c(n_1)\n_x,\quad \vec{S}=(S_1,S_2,S_3)\eqdefa
\n_t-c(n_1)\n_x.\label{2.1} 
\eeq 
Without confusion, we still use the letters $R$ and $S$ to denote Riemann variables as in the previous section.
Then (\ref{1.1}) can be reformulated as: 
\beq \left\{
\begin{array}{ll}
\p_tR_1-c(n_1)\p_xR_1=\f1{4c^2(n_1)}\bigl\{(c^2(n_1)-\ga)(|\vec{R}|^2+|\vec{S}|^2)-2(3c^2(n_1)-\ga)\vec{R}\cdot\vec{S}\bigr\}n_1\\
\qquad\qquad\qquad\qquad\quad+\f{c'(n_1)}{2c(n_1)}(R_1-S_1)R_1-\frac{\mu}{2}(R_1+S_1),\\
\p_tS_1+c(n_1)\p_xS_1=\f1{4c^2(n_1)}\bigl\{(c^2(n_1)-\ga)(|\vec{R}|^2+|\vec{S}|^2)-2(3c^2(n_1)-\ga)\vec{R}\cdot\vec{S}\bigr\}n_1\\
\qquad\qquad\qquad\qquad\quad-\f{c'(n_1)}{2c(n_1)}(R_1-S_1)S_1-\frac{\mu}{2}(R_1+S_1)\\
\p_tR_2-c(n_1)\p_xR_2=\f1{4c^2(n_1)}\bigl\{(c^2(n_1)-\al)(|\vec{R}|^2+|\vec{S}|^2)-2(3c^2(n_1)-\al)\vec{R}\cdot\vec{S}\bigr\}n_2\\
\qquad\qquad\qquad\qquad\quad+\f{c'(n_1)}{2c(n_1)}(R_2-S_2)R_1-\frac{\mu}{2}(R_2+S_2), \\
\p_tS_2+c(n_1)\p_xS_2=\f1{4c^2(n_1)}\bigl\{(c^2(n_1)-\al)(|\vec{R}|^2+|\vec{S}|^2)-2(3c^2(n_1)-\al)\vec{R}\cdot\vec{S}\bigr\}n_2\\
\qquad\qquad\qquad\qquad\quad-\f{c'(n_1)}{2c(n_1)}(R_2-S_2)S_1-\frac{\mu}{2}(R_2+S_2), \\
\p_tR_3-c(n_1)\p_xR_3=\f1{4c^2(n_1)}\bigl\{(c^2(n_1)-\al)(|\vec{R}|^2+|\vec{S}|^2)-2(3c^2(n_1)-\al)\vec{R}\cdot\vec{S}\bigr\}n_3\\
\qquad\qquad\qquad\qquad\quad+\f{c'(n_1)}{2c(n_1)}(R_3-S_3)R_1-\frac{\mu}{2}(R_3+S_3), \\
\p_tS_3+c(n_1)\p_xS_3=\f1{4c^2(n_1)}\bigl\{(c^2(n_1)-\al)(|\vec{R}|^2+|\vec{S}|^2)-2(3c^2(n_1)-\al)\vec{R}\cdot\vec{S}\bigr\}n_3\\
\qquad\qquad\qquad\qquad\quad-\f{c'(n_1)}{2c(n_1)}(R_3-S_3)S_1-\frac{\mu}{2}(R_3+S_3), \\
\n_x=\frac{\vec{R}-\vec{S}}{2c(n_1)}\quad \mbox{or} \quad \n_t=\frac{\vec{R}+\vec{S}}2.\\
  \end{array}\right. \label{2.2} \eeq
The energy equation \eqref{1.4a2} equals to
\beq\label{2.3}
\f14\p_t\bigl(|\vec{R}|^2+|\vec{S}|^2\bigr)-\f14\p_x\bigl[c(n_1)(|\vec{R}|^2-|\vec{S}|^2)\bigr]=-\frac{\mu}{4}|\vec{R}+\vec{S}|^2\leq 0.
\eeq

We define the forward and backward characteristics as follows \beq
\left\{
\begin{array}{ll}
\frac{d}{ds}x^\pm(s,t,x)=\pm c(n_1(s,x^\pm(s,t,x))),\\
x^\pm|_{s=t}=x.
\end{array}\right. \label{2.4} \eeq
Then we define the coordinate transformation: 
\beno 
X\eqdefa
\int_0^{x^-(0,t,x)}[1+|\vec{R}|^2(0,y)]\,dy,\quad\mbox{and}\quad
Y\eqdefa \int_{x^+(0,t,x)}^0[1+|\vec{S}|^2(0,y)]\,dy. \eeno This
implies \beq X_t-c(n_1)X_x=0,\quad Y_t+c(n_1)Y_x=0. \label{2.6} \eeq
Furthermore, for any smooth function $f,$ we get by using
(\ref{2.6}) that 
\beq\begin{split}
&f_t+c(n_1)f_x=(X_t+c(n_1)X_x)f_X=2c(n_1)X_xf_X\\
&f_t-c(n_1)f_x=(Y_t-c(n_1)Y_x)f_Y=-2c(n_1)Y_xf_Y.
\end{split} \label{2.6a} \eeq 

Using (\ref{2.6a}), we can transform the directional derivatives $\partial_t\pm c \partial_x$
on the $(t,x)$-coordinates into $\partial_X$ and $\partial_Y$ on the $(X,Y)$-coordinates,
hence get a new semi-linear system on $(X,Y)$-coordinates.
We start the calculation from $u_X$ and $u_Y$. 
To complete the system, we introduce several new variables:
\beq p\eqdefa
\frac{1+|\vec{R}|^2}{X_x},\qquad q\eqdefa
\frac{1+|\vec{S}|^2}{-Y_x}, \label{2.6b} \eeq 
and
\beq\begin{split}
&\vec{\ell}=(\ell_1,\ell_2,\ell_3)\eqdefa\frac{\vec{R}}{1+|\vec{R}|^2},\quad
\vec{m}=(m_1,m_2,m_3)\eqdefa\frac{\vec{S}}{1+|\vec{S}|^2},\quad
\mbox{and}\\
& h_1\eqdefa \frac1{1+|\vec{R}|^2},\quad h_2\eqdefa
\frac1{1+|\vec{S}|^2}.\end{split} \label{2.11} 
\eeq
For smooth solutions, $|\vec\ell|$, $|\vec{m}|$, $|h_1|$ and $|h_2|$ are all less than $1$. 
However, $|p|$ and $|q|$ might go to infinity when the possible gradient blowup happens.
In the next section, in order to estimating them, we will consider the energy equation, where the energy equation can 
help us estimate  $p$ and $q$ since 
$X$ and $Y$ are {\emph{energy-dependent}} coordinates. 

We derive the new system in several steps:
\bigskip

\paragraph{\bf{1}}
Using (\ref{2.6a}) and \eqref{2.11}, we have
\beq\label{2.18}\begin{split}
\p_Y\n=&\frac1{2c(n_1)(-Y_x)}(\p_t\n-c(n_1)\p_x\n)=\f{q}{2c(n_1)}\vec{m},\\
\p_X\n=&\frac1{2c(n_1)X_x}(\p_t\n+c(n_1)\p_x\n)=\f{p}{2c(n_1)}\vec{\ell}.
\end{split}
\eeq

\bigskip
\paragraph{\bf{2}}
Then \beno
\p_tp-c(n_1)\p_xp&=&2(X_x)^{-1}[\vec{R}\cdot(\p_t\vec{R}-c(n_1)\p_x\vec{R})]\\
&&-(X_x)^{-2}[\p_tX_x-c(n_1)\p_xX_x](1+|\vec{R}|^2). \eeno While
using (\ref{2.2}), we have \beno
\begin{split}
\vec{R}\cdot(\p_t\vec{R}-c(n_1)\p_x\vec{R})
=\frac{1}{4c^2(n_1)}\bigl\{(c^2(n_1)-\al)(|\vec{R}|^2+|\vec{S}|^2)-2(3c^2(n_1)-\al)\vec{R}\cdot\vec{S}\,\bigr\}\vec{R}\cdot\n\\
\qquad
+\f{\al-\ga}{4c^2(n_1)}(|\vec{R}|^2+|\vec{S}|^2-2\vec{R}\cdot\vec{S})R_1n_1+\f{c'(n_1)}{2c(n_1)}R_1(|\vec{R}|^2-\vec{R}\cdot\vec{S}) -\frac{\mu}{2}(|\vec R|^2+\vec{R}\cdot\vec{S}),
\end{split}
\eeno while notice that $|\n|=1$ and
$c'(n_1)=\f{(\ga-\al)n_1}{c(n_1)},$ so that $\vec{R}\cdot\n=0,$ and
there holds \beq
 \label{2.7}
\vec{R}\cdot(\p_t\vec{R}-c(n_1)\p_x\vec{R})=\f{c'(n_1)}{4c(n_1)}R_1(|\vec{R}|^2-|\vec{S}|^2)
-\frac{\mu}{2}(|\vec R|^2+\vec{R}\cdot\vec{S}),
\eeq
which together with (\ref{2.6}) applied gives
\beno
\p_t p-c(n_1)\p_x p=\frac{p}{1+|\vec{R}|^2}\Big[\frac{c'(n_1)}{2c(n_1)}
\bigl(-R_1(1+|\vec{S}|^2)+S_1(1+|\vec{R}|^2)\bigr)  
-\mu(|\vec R|^2+\vec{R}\cdot\vec{S})\Big], 
\eeno 
from which, and
(\ref{2.6a}-\ref{2.6b}), we infer 
\beq\begin{split}
p_Y=&\frac1{2c(n_1)(-Y_x)}(p_t-c(n_1)p_x)\\
=&\frac{pq}{2c(n_1)}\Big[\frac{c'(n_1)}{2c(n_1)}(-\frac{R_1}{1+|\vec{R}|^2}+
\frac{S_1}{1+|\vec{S}|^2})-\mu\frac{|\vec R|^2+\vec{R}\cdot\vec{S}}
{(1+|\vec{S}|^2)(1+|\vec{R}|^2)}\Big].\end{split} \label{2.8} \eeq
Similarly, 
\beno
\p_t q+c(n_1)\p_x q&=&2(-Y_x)^{-1}[\vec{S}\cdot(\p_t\vec{S}+c(n_1)\p_x\vec{S})]\\
&&+(Y_x)^{-2}(\p_tY_x+c(n_1)\p_xY_x)(1+|\vec{S}|^2),\eeno 
then by (\ref{2.2}), we have 
\beno\begin{split}
\vec{S}\cdot(\p_t\vec{S}+c(n_1)\p_x\vec{S})
=\frac{1}{4c^2(n_1)}\bigl\{(c^2(n_1)-\al)(|\vec{R}|^2+|\vec{S}|^2)-2(3c^2(n_1)-\al)\vec{R}\cdot\vec{S}\,\bigr\}\vec{S}\cdot\n\\
\qquad
+\f{\al-\ga}{4c^2(n_1)}\bigl(|\vec{R}|^2+|\vec{S}|^2-2\vec{R}\cdot\vec{S}\bigr)S_1n_1+\f{c'(n_1)}{2c(n_1)}S_1(|\vec{S}|^2-\vec{R}\cdot\vec{S}) -\frac{\mu}{2}(|\vec S|^2+\vec{R}\cdot\vec{S}),
\end{split}
\eeno which along with the fact that $\vec{S}\cdot\n=0$ leads to
\beq
\vec{S}\cdot(\p_t\vec{S}+c(n_1)\p_x\vec{S})=-\f{c'(n_1)}{4c(n_1)}S_1(|\vec{R}|^2-|\vec{S}|^2) -\frac{\mu}{2}(|\vec S|^2+\vec{R}\cdot\vec{S}),
\label{2.9} \eeq 
from which  and (\ref{2.6}), we infer 
\beno
\p_t q+c(n_1)\p_x q=\frac{q}{1+|\vec{S}|^2}\Big[\frac{c'(n_1)}{2c(n_1)}
\bigl(R_1(1+|\vec{S}|^2)-S_1(1+|\vec{R}|^2)\bigr)  
-\mu(|\vec S|^2+\vec{R}\cdot\vec{S})\Big].
\eeno 
Next by
(\ref{2.6a}-\ref{2.6b}), we have 
\beq\begin{split} 
q_X=&\frac1{2c(n_1)X_x}(q_t+c(n_1)q_x)\\
=&\frac{pq}{2c(n_1)}\Big[\frac{c'(n_1)}{2c(n_1)}\bigl(\frac{R_1}{1+|\vec{R}|^2}-
\frac{S_1}{1+|\vec{S}|^2}\bigr)-\mu\frac{|\vec S|^2+\vec{R}\cdot\vec{S}}
{(1+|\vec{S}|^2)(1+|\vec{R}|^2)}\Big].
\end{split}
\label{2.10} \eeq 
\paragraph{\bf{3}}
Next, \beno
\p_t\ell_1-c(n_1)\p_x\ell_1&=&(1+|\vec{R}|^2)^{-1}(\p_tR_1-c(n_1)\p_xR_1)\\
&&-2(1+|\vec{R}|^2)^{-2}R_1\bigl[\vec{R}\cdot(\p_t\vec{R}-c(n_1)\p_x\vec{R})\,\bigr],
 \eeno which
together with (\ref{2.2}) and (\ref{2.7}) implies 
\beno\begin{split}
\p_t\ell_1-c(n_1)\p_x\ell_1=(1+|\vec{R}|^2)^{-1}\bigl[\f{c^2(n_1)-\ga}{4c^2(n_1)}(|\vec{R}|^2+|\vec{S}|^2)
-\f{3c^2(n_1)-\ga}{2c^2(n_1)}\vec{R}\cdot\vec{S}\,\bigr]n_1\\
\qquad+
(1+|\vec{R}|^2)^{-2}R_1\frac{c'(n_1)}{2c(n_1)}\big[(R_1(1+|\vec{S}|^2)-S_1(1+|\vec{R}|^2)\big]\\
+(1+|\vec{R}|^2)^{-2}R_1\mu(|\vec R|^2+\vec{R}\cdot\vec{S})
-(1+|\vec{R}|^2)^{-1}\frac{\mu}{2}(R_1+S_1).
\end{split}
\eeno Using (\ref{2.6a}-\ref{2.6b}), we obtain
\beq\begin{split}
\p_Y\ell_1&=\f{q}{8c^3(n_1)}\bigl[(c^2(n_1)-\ga)(h_1+h_2-2h_1h_2)-2(3c^2(n_1)-\ga)\vec{\ell}\cdot\vec{m}\bigr]n_1\\
&+\frac{c'(n_1)}{4c^2(n_1)}\ell_1q(\ell_1-m_1) +\frac{\mu q}{4c(n_1)}\bigl[2 \ell_1(h_2-h_1 h_2+\vec{\ell}\cdot \vec{m})-\ell_1 h_2-m_1 h_1\bigr].
\end{split} \label{2.12} 
\eeq 
Similarly by (\ref{2.11}), we have
\beno
\begin{split}
\p_tm_1+c(n_1)\p_xm_1=&(1+|\vec{S}|^2)^{-1}(\p_tS_1+c(n_1)\p_xS_1)\\
&-2(1+|\vec{S}|^2)^{-2}S_1\bigl[\vec{S}\cdot(\p_t\vec{S}+c(n_1)\p_x\vec{S})\,\bigr],
\end{split}\eeno which together with \eqref{2.2} and (\ref{2.9}) ensures that \beno\begin{split}
\p_tm_1+c(n_1)\p_xm_1=(1+|\vec{S}|^2)^{-1}\bigl[\f{c^2(n_1)-\ga}{4c^2(n_1)}(|\vec{R}|^2+|\vec{S}|^2)
-\f{3c^2(n_1)-\ga}{2c^2(n_1)}\vec{R}\cdot\vec{S}\,\bigr]n_1\\
\qquad-
(1+|\vec{S}|^2)^{-2}S_1\frac{c'(n_1)}{2c(n_1)}\big[(R_1(1+|\vec{S}|^2)-S_1(1+|\vec{R}|^2)\big]
\\
+(1+|\vec{S}|^2)^{-2}S_1\mu(|\vec S|^2+\vec{R}\cdot\vec{S})
-(1+|\vec{S}|^2)^{-1}\frac{\mu}{2}(R_1+S_1),
\end{split}
\eeno  from which and (\ref{2.6a}-\ref{2.6b}), we infer
\beq\begin{split}
\p_Xm_1&=\f{p}{8c^3(n_1)}\bigl[(c^2(n_1)-\ga)(h_1+h_2-2h_1h_2)-2(3c^2(n_1)-\ga)\vec{\ell}\cdot\vec{m}\bigr]n_1\\
&-\frac{c'(n_1)}{4c^2(n_1)}m_1p(\ell_1-m_1)+\frac{\mu p}{4c(n_1)}\bigl[2 m_1(h_1-h_1 h_2+\vec{\ell}\cdot \vec{m})-\ell_1 h_2-m_1 h_1\bigr].
\end{split} \label{2.13} \eeq
Following  the same line, we deduce from  \eqref{2.2} and
\eqref{2.7} that \beno
\begin{split}
\p_t\ell_2-c(n_1)\p_x\ell_2=(1+|\vec{R}|^2)^{-1}\bigl[\f{c^2(n_1)-\al}{4c^2(n_1)}(|\vec{R}|^2+|\vec{S}|^2)
-\f{3c^2(n_1)-\al}{2c^2(n_1)}\vec{R}\cdot\vec{S}\,\bigr]n_2\\
+
(1+|\vec{R}|^2)^{-2}R_1\frac{c'(n_1)}{2c(n_1)}\bigl[R_2(1+|\vec{S}|^2)-S_2(1+|\vec{R}|^2)\bigl]\\
+(1+|\vec{R}|^2)^{-2}R_2\mu(|\vec R|^2+\vec{R}\cdot\vec{S})
-(1+|\vec{R}|^2)^{-1}\frac{\mu}{2}(R_2+S_2),
\end{split}
\eeno and \beno
\begin{split}
\p_t\ell_3-c(n_1)\p_x\ell_3=(1+|\vec{R}|^2)^{-1}\bigl[\f{c^2(n_1)-\al}{4c^2(n_1)}(|\vec{R}|^2+|\vec{S}|^2)
-\f{3c^2(n_1)-\al}{2c^2(n_1)}\vec{R}\cdot\vec{S}\,\bigr]n_3\\
+
(1+|\vec{R}|^2)^{-2}R_1\frac{c'(n_1)}{2c(n_1)}\bigl[R_3(1+|\vec{S}|^2)-S_3(1+|\vec{R}|^2)\bigl]\\
+(1+|\vec{R}|^2)^{-2}R_3\mu(|\vec R|^2+\vec{R}\cdot\vec{S})
-(1+|\vec{R}|^2)^{-1}\frac{\mu}{2}(R_3+S_3),
\end{split}
\eeno which together with (\ref{2.6a}-\ref{2.6b}) derive that \beq
\begin{split}
\p_Y\ell_2&=\f{q}{8c^3(n_1)}\bigl[(c^2(n_1)-\al)(h_1+h_2-2h_1h_2)-2(3c^2(n_1)-\al)\vec{\ell}\cdot\vec{m}\bigr]n_2\\
&+\frac{c'(n_1)}{4c^2(n_1)}\ell_1 q(\ell_2-m_2)+\frac{\mu q}{4c(n_1)}\bigl[2 \ell_2(h_2-h_1 h_2+\vec{\ell}\cdot \vec{m})-\ell_2 h_2-m_2 h_1\bigr],\\
\p_Y\ell_3&=\f{q}{8c^3(n_1)}\bigl[(c^2(n_1)-\al)(h_1+h_2-2h_1h_2)-2(3c^2(n_1)-\al)\vec{\ell}\cdot\vec{m}\bigr]n_3\\
&+\frac{c'(n_1)}{4c^2(n_1)}\ell_1 q(\ell_3-m_3)+\frac{\mu q}{4c(n_1)}\bigl[2 \ell_3(h_2-h_1 h_2+\vec{\ell}\cdot \vec{m})-\ell_3 h_2-m_3 h_1\bigr].
\end{split} \label{2.14} \eeq
While we deduce from \eqref{2.2} and \eqref{2.9} that \beno
\begin{split}
\p_tm_2+c(n_1)\p_xm_2=(1+|\vec{S}|^2)^{-1}\bigl[\f{c^2(n_1)-\al}{4c^2(n_1)}(|\vec{R}|^2+|\vec{S}|^2)
-\f{3c^2(n_1)-\al}{2c^2(n_1)}\vec{R}\cdot\vec{S}\,\bigr]n_2\\
-
(1+|\vec{S}|^2)^{-2}S_1\frac{c'(n_1)}{2c(n_1)}\bigl[R_2(1+|\vec{S}|^2)-S_2(1+|\vec{R}|^2)\bigl]
\\
+(1+|\vec{S}|^2)^{-2}S_2\mu(|\vec S|^2+\vec{R}\cdot\vec{S})
-(1+|\vec{S}|^2)^{-1}\frac{\mu}{2}(R_2+S_2),
\end{split}\eeno
and \beno
\begin{split}
\p_tm_3+c(n_1)\p_xm_3=(1+|\vec{S}|^2)^{-1}\bigl[\f{c^2(n_1)-\al}{4c^2(n_1)}(|\vec{R}|^2+|\vec{S}|^2)
-\f{3c^2(n_1)-\al}{2c^2(n_1)}\vec{R}\cdot\vec{S}\,\bigr]n_3\\
-
(1+|\vec{S}|^2)^{-2}S_1\frac{c'(n_1)}{2c(n_1)}\bigl[R_3(1+|\vec{S}|^2)-S_3(1+|\vec{R}|^2)\bigl]
\\
+(1+|\vec{S}|^2)^{-2}S_3\mu(|\vec S|^2+\vec{R}\cdot\vec{S})
-(1+|\vec{S}|^2)^{-1}\frac{\mu}{2}(R_3+S_3),
\end{split}\eeno
which together with   (\ref{2.6a}-\ref{2.6b})  implies that
\beq\begin{split}
\p_Xm_2&=\f{p}{8c^3(n_1)}\bigl[(c^2(n_1)-\al)(h_1+h_2-2h_1h_2)-2(3c^2(n_1)-\al)
\vec{\ell}\cdot\vec{m}\bigr]n_2\\
&-\frac{c'(n_1)}{4c^2(n_1)}m_1p(\ell_2-m_2)+\frac{\mu p}{4c(n_1)}\bigl[2 m_2(h_1-h_1 h_2
+\vec{\ell}\cdot \vec{m})-\ell_2 h_2-m_2 h_1\bigr],\\
\p_Xm_3&=\f{p}{8c^3(n_1)}\bigl[(c^2(n_1)-\al)(h_1+h_2-2h_1h_2)-2(3c^2(n_1)-\al)
\vec{\ell}\cdot\vec{m}\bigr]n_3\\
&-\frac{c'(n_1)}{4c^2(n_1)}m_1p(\ell_3-m_3)+\frac{\mu p}{4c(n_1)}\bigl[2 m_3(h_1-h_1 h_2
+\vec{\ell}\cdot \vec{m})-\ell_3 h_2-m_3 h_1\bigr].
\end{split} \label{2.15} \eeq
\bigskip

\paragraph{\bf{4}}
It follows from
(\ref{2.7}) and \eqref{2.11} that 
\beno
\begin{split}
\p_th_1-c(n_1)\p_xh_1=&-2(1+|\vec{R}|^2)^{-2}\bigl[\vec{R}\cdot(\p_t\vec{R}-c(n_1)\p_x\vec{R})\bigl]\\
=&-(1+|\vec{R}|^2)^{-2}\bigl[\f{c'(n_1)}{2c(n_1)}R_1(|\vec{R}|^2-|\vec{S}|^2)
-\mu(|\vec R|^2+\vec{R}\cdot\vec{S})\bigl].
\end{split} 
\eeno 
Then we get by using (\ref{2.6a}-\ref{2.6b}) that  
\beq
\p_Yh_1=\frac{c'(n_1)}{4c^2(n_1)}q\ell_1(h_1-h_2) +\frac{\mu}{2c(n_1)} q h_1(h_2-h_1 h_2
+\vec{\ell}\cdot \vec{m}). \label{2.16} 
\eeq
Similar calculations together with (\ref{2.9}) gives 
\beno
\begin{split}
\p_th_2+c(n_1)\p_xh_2=(1+|\vec{S}|^2)^{-2}\big[\frac{c'(n_1)}{2c(n_1)}S_1(|\vec{R}|^2-|\vec{S}|^2)
+\mu(|\vec S|^2+\vec{R}\cdot\vec{S})\big],
\end{split} \eeno 
which together with  (\ref{2.6a}-\ref{2.6b}) implies that 
\beq 
\p_Xh_2=\frac{c'(n_1)}{4c^2(n_1)}pm_1(h_2-h_1)+\frac{\mu}{2c(n_1)} p h_2(h_1-h_1 h_2+\vec{\ell}\cdot \vec{m}).
\label{2.17} 
\eeq 
\bigskip

\paragraph{\bf{5}}
In summary, we obtain \beq \left\{
\begin{array}{l}
\p_Y\ell_1=\f{q}{8c^3(n_1)}\bigl[(c^2(n_1)-\ga)(h_1+h_2-2h_1h_2)-2(3c^2(n_1)-\ga)\vec{\ell}\cdot\vec{m}\bigr]n_1\\
\qquad\qquad+\frac{c'(n_1)}{4c^2(n_1)}\ell_1q(\ell_1-m_1) +\frac{\mu q}{4c(n_1)}\bigl[2 \ell_1(h_2-h_1 h_2+\vec{\ell}\cdot \vec{m})-\ell_1 h_2-m_1 h_1\bigr], \\
\p_Xm_1=\f{p}{8c^3(n_1)}\bigl[(c^2(n_1)-\ga)(h_1+h_2-2h_1h_2)-2(3c^2(n_1)-\ga)\vec{\ell}\cdot\vec{m}\bigr]n_1\\
\qquad\qquad-\frac{c'(n_1)}{4c^2(n_1)}m_1p(\ell_1-m_1)+\frac{\mu p}{4c(n_1)}\bigl[2 m_1(h_1-h_1 h_2+\vec{\ell}\cdot \vec{m})-\ell_1 h_2-m_1 h_1\bigr],\\
\p_Y\ell_2=\f{q}{8c^3(n_1)}\bigl[(c^2(n_1)-\al)(h_1+h_2-2h_1h_2)-2(3c^2(n_1)-\al)\vec{\ell}\cdot\vec{m}\bigr]n_2\\
\qquad\qquad+\frac{c'(n_1)}{4c^2(n_1)}\ell_1 q(\ell_2-m_2)+\frac{\mu q}{4c(n_1)}\bigl[2 \ell_2(h_2-h_1 h_2+\vec{\ell}\cdot \vec{m})-\ell_2 h_2-m_2 h_1\bigr],\\
\p_Xm_2=\f{p}{8c^3(n_1)}\bigl[(c^2(n_1)-\al)(h_1+h_2-2h_1h_2)-2(3c^2(n_1)-\al)\vec{\ell}\cdot\vec{m}\bigr]n_2\\
\qquad\qquad-\frac{c'(n_1)}{4c^2(n_1)}m_1p(\ell_2-m_2)+\frac{\mu p}{4c(n_1)}\bigl[2 m_2(h_1-h_1 h_2+\vec{\ell}\cdot \vec{m})-\ell_2 h_2-m_2 h_1\bigr],\\
\p_Y\ell_3=\f{q}{8c^3(n_1)}\bigl[(c^2(n_1)-\al)(h_1+h_2-2h_1h_2)-2(3c^2(n_1)-\al)\vec{\ell}\cdot\vec{m}\bigr]n_3\\
\qquad\qquad+\frac{c'(n_1)}{4c^2(n_1)}\ell_1 q(\ell_3-m_3)+\frac{\mu q}{4c(n_1)}\bigl[2 \ell_3(h_2-h_1 h_2+\vec{\ell}\cdot \vec{m})-\ell_3 h_2-m_3 h_1\bigr],\\
\p_Xm_3=\f{p}{8c^3(n_1)}\bigl[(c^2(n_1)-\al)(h_1+h_2-2h_1h_2)-2(3c^2(n_1)-\al)\vec{\ell}\cdot\vec{m}\bigr]n_3\\
\qquad\qquad-\frac{c'(n_1)}{4c^2(n_1)}m_1p(\ell_3-m_3)+\frac{\mu p}{4c(n_1)}\bigl[2 m_3(h_1-h_1 h_2+\vec{\ell}\cdot \vec{m})-\ell_3 h_2-m_3 h_1\bigr],\\
\p_Y\n=\f{q}{2c(n_1)}\vec{m},\quad\mbox{or}\quad
\p_X\n=\f{p}{2c(n_1)}\vec{\ell},\\
\p_Yh_1=\frac{c'(n_1)}{4c^2(n_1)}q\ell_1(h_1-h_2) +\frac{\mu}{2c(n_1)} q h_1(h_2-h_1 h_2+\vec{\ell}\cdot \vec{m}),\\
\p_Xh_2=\frac{c'(n_1)}{4c^2(n_1)}pm_1(h_2-h_1)+\frac{\mu}{2c(n_1)} p h_2(h_1-h_1 h_2+\vec{\ell}\cdot \vec{m}),\\
p_Y=\frac{pq}{2c(n_1)}\bigl[-\frac{c'(n_1)}{2c(n_1)}\big( \ell_1-m_1\big)-\mu (h_2-h_1h_2+\vec{\ell}\cdot \vec{m})\bigr],\\
q_X=\frac{pq}{2c(n_1)}\bigl[\frac{c'(n_1)}{2c(n_1)}\big( \ell_1-m_1\big)-\mu (h_1-h_1h_2+\vec{\ell}\cdot \vec{m})\bigr].
\end{array}\right. \label{2.19} \eeq
\subsection{Consistency of variables}
Before we prove the global existence of \eqref{2.19},
we first show that the various variables introduced for \eqref{2.19} are consistent, following from the proposition below.
Note we only use \eqref{2.19} in the proof of this proposition, while the original equation (\ref{1.1}) and definitions (\ref{2.6b}) and (\ref{2.11}) are not used.

\begin{prop}\label{prop2.1}
{\sl For smooth enough data, the following conservative
quantities hold: \beq\label{2.20}
\begin{split}
&\vec{\ell}\cdot\n(X,Y)=\vec{m}\cdot\n(X,Y)=0\quad |\n(X,Y)|=1\quad
\mbox{and}\\
& |\vec{\ell}(X,Y)|^2+h_1^2(X,Y)=h_1(X,Y),\quad
|\vec{m}(X,Y)|^2+h_2^2(X,Y)=h_2(X,Y) \quad\forall\ X, Y,
\end{split}
\eeq as long as \beno
\begin{split}
&\vec{\ell}\cdot\n(X,\varphi(X))=\vec{m}\cdot\n(X,\varphi(X))=0\quad
|\n(X,\varphi(X))|=1\quad
\mbox{and}\\
&
|\vec{\ell}(X,\varphi(X))|^2+h_1^2(X,\varphi(X))=h_1(X,\varphi(X)),\\
&|\vec{m}(X,\varphi(X))|^2+h_2^2(X,\varphi(X))=h_2(X,\varphi(X)).
\end{split}
\eeno }
Here $\varphi$ represents the initial curve, see \eqref{1.33} from the next section.
\end{prop}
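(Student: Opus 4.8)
The plan is to treat each of the five claimed identities as a function of the variables $X,Y$ and show that it is constant along $Y$-characteristics (i.e.\ its $\partial_Y$-derivative vanishes) once we know it holds on the initial curve $Y=\varphi(X)$; by symmetry the analogous statements hold along $X$-characteristics. Since the initial curve is reached from every point by moving backward along characteristics, constancy in one family suffices to propagate the data. Concretely, set $A\eqdef|\n|^2$, $B_1\eqdef\vec\ell\cdot\n$, $B_2\eqdef\vec m\cdot\n$, $G_1\eqdef|\vec\ell|^2+h_1^2-h_1$, and $G_2\eqdef|\vec m|^2+h_2^2-h_2$. I would compute $\partial_Y$ of each of these using only the system \eqref{2.19}.

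First I would handle $\partial_Y A$ and $\partial_Y B_2$, which close among themselves: from $\partial_Y\n=\frac{q}{2c(n_1)}\vec m$ we get $\partial_Y A = \frac{q}{c(n_1)}\,\vec m\cdot\n = \frac{q}{c(n_1)}B_2$. For $\partial_Y B_2 = \partial_Y(\vec m\cdot\n) = (\partial_Y\vec m)\cdot\n + \vec m\cdot\partial_Y\n$, using the $\partial_Y m_i$ equations from \eqref{2.19} and summing against $n_i$ (here I use $|\n|^2=1$ and $c'(n_1)c(n_1)=(\ga-\al)n_1$ only through the structure of the coefficients, but really just the algebraic form of the right-hand sides), the terms organize into a linear combination of $B_1$, $B_2$, and $A-1$ times bounded coefficients. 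The key observation is that the $n_1$-multiplied bracket term in $\partial_Y m_i$ contributes $(\cdots)(h_1+h_2-2h_1h_2)\,\vec n\cdot\n\,\cdot$(stuff)$=(\cdots)A$, and $\vec m\cdot\partial_Y\n=\frac{q}{2c}|\vec m|^2$, so one needs $G_2$ and $B_2$ to reappear; thus $\partial_Y A$, $\partial_Y B_2$, $\partial_Y B_1$, and $\partial_Y G_2$ together form a closed \emph{linear homogeneous} ODE system in $(A-1, B_1, B_2, G_1, G_2)$ along each $Y$-characteristic, with locally bounded coefficients. Then I would do the same for $\partial_Y B_1 = \partial_Y(\vec\ell\cdot\n)$ and $\partial_Y G_1 = \partial_Y(|\vec\ell|^2+h_1^2-h_1)$, and check that they too depend only linearly on the same five quantities — this is where the viscous $\mu$-terms must be tracked carefully, since they are new relative to \cite{CZZ12}, but their coefficients $\ell_1 h_2$, $m_1 h_1$, etc., are all bounded for smooth solutions (where $|\vec\ell|,|\vec m|,|h_1|,|h_2|<1$).

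Having assembled a closed first-order linear homogeneous system $\frac{d}{dY}\mathbf v = M(X,Y)\mathbf v$ along each characteristic $X=\text{const}$, where $\mathbf v=(A-1,B_1,B_2,G_1,G_2)^{\!\top}$ and $M$ has locally bounded entries (for smooth solutions), I would invoke uniqueness for linear ODEs: since $\mathbf v=0$ on the initial curve $Y=\varphi(X)$ by hypothesis, $\mathbf v\equiv 0$ everywhere. That gives $|\n|=1$, $\vec\ell\cdot\n=\vec m\cdot\n=0$, and the two quadratic relations, which is exactly \eqref{2.20}.

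The main obstacle — and the only genuinely laborious part — is verifying that the five derivatives really do close, i.e.\ that no term proportional to $p$, $q$ alone (without an accompanying factor of one of $A-1,B_1,B_2,G_1,G_2$) survives after using the definitions $h_1,h_2$ and the structure of the nonlinearities. This requires a somewhat delicate bookkeeping of the $n_1$-bracket terms and the $\mu$-bracket terms in the $\partial_Y\ell_i$, $\partial_Y m_i$, $\partial_Y h_1$, $\partial_X h_2$ equations; the $c^2(n_1)-\ga$ versus $c^2(n_1)-\al$ discrepancy between the $i=1$ and $i=2,3$ components is harmless because it only enters multiplied by $n_i$ and thus is absorbed into the $\vec n\cdot\n$ (hence $A$) or $\vec n\cdot\vec\ell$ (hence a combination of $B_1$ and $|\vec\ell|^2$) contractions. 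I expect the computation to mirror exactly the $\mu=0$ computation in \cite{CZZ12} with the extra $\mu$-terms forming their own closed linear contribution, so the structural closure is essentially guaranteed; writing it out is the price of the proof.
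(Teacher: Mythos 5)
Your overall strategy — reduce \eqref{2.20} to a closed \emph{linear homogeneous} system in the quantities $A-1=|\n|^2-1$, $B_1=\vec\ell\cdot\n$, $B_2=\vec m\cdot\n$, $G_1=|\vec\ell|^2+h_1^2-h_1$, $G_2=|\vec m|^2+h_2^2-h_2$, and then conclude by uniqueness with zero data on $\gamma$ — is exactly the paper's strategy. But there is a concrete gap in the way you propose to close the system.

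You state that you will compute $\partial_Y$ of all five quantities, and in particular speak of "the $\partial_Y m_i$ equations from \eqref{2.19}" and of $\partial_Y B_2$, $\partial_Y G_2$. The semi-linear system \eqref{2.19} does not supply $\partial_Y m_i$ or $\partial_Y h_2$: by construction it gives $\partial_Y$ of $\vec\ell,h_1,p$ but only $\partial_X$ of $\vec m,h_2,q$ (this asymmetry is the whole point of the energy-dependent coordinates, since $\vec\ell,h_1$ are naturally constant along backward characteristics and $\vec m,h_2$ along forward ones). So the quantities $B_2$ and $G_2$ cannot be differentiated in $Y$ using \eqref{2.19} alone, and there is no "$\frac{d}{dY}\mathbf v=M\mathbf v$ along $X=\mathrm{const}$" with $\mathbf v=(A-1,B_1,B_2,G_1,G_2)^{\!\top}$. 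The fix — which is what the paper actually does in \eqref{2.21}–\eqref{2.25} — is to mix directions: compute $\partial_Y G_1$, $\partial_Y B_1$, $\partial_Y(A-1)$ (these involve only $\vec\ell,h_1,\n$, all of which have $\partial_Y$-equations), together with $\partial_X B_2$ and $\partial_X G_2$ (these involve only $\vec m,h_2,\n$, which have $\partial_X$-equations), and check that all right-hand sides are linear combinations of $A-1,B_1,B_2,G_1,G_2$ with locally bounded coefficients. The resulting object is not an ODE along a single characteristic but a coupled \emph{linear hyperbolic} system on $\Omega^+$ with zero Cauchy data on the non-characteristic curve $\gamma$; uniqueness for that class of problems is what finishes the argument. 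Your remark "by symmetry the analogous statements hold along $X$-characteristics" does not repair this, because the five quantities are coupled to each other across both families of characteristics (e.g.\ $\partial_Y B_1$ involves $B_2$ and $\partial_X B_2$ involves $B_1$), so one cannot propagate each quantity independently along a single family. Once you reorganize the argument as a two-family linear hyperbolic system, the remaining bookkeeping of the $\mu$-terms proceeds as you describe and matches the paper.
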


\begin{proof} We first deduce from \eqref{2.19} that
\beno
\begin{split}
\p_Y\bigl[ |\vec{\ell}|^2+h_1^2-h_1\bigr]=&\f{q}{4c^3(n_1)}\bigl[(c^2(n_1)-\al)(h_1+h_2-2h_1h_2)-2(3c^2(n_1)-\al)\vec{\ell}\cdot\vec{m}\bigr]\vec{\ell}\cdot\n\\
& +\frac{(\al-\ga)n_1}{4c^3(n_1)}q\bigl[(h_1+h_2-2h_1h_2)-2\vec{\ell}\cdot\vec{m}\bigr]\ell_1\\
&+\f{c'(n_1)}{4c^2(n_1)}q\ell_1\bigl[2(|\vec{\ell}|^2-\vec{\ell}\cdot\vec{m})+2h_1(h_1-h_2)-(h_1-h_2)\bigr]\\
&+\frac{\mu q}{2c(n_1)}\vec\ell\cdot\bigl[2\vec\ell(h_2-h_1h_2+\vec{\ell}\cdot\vec{m})-\vec{\ell} h_2 -\vec{m}h_1 \bigr]\\
&+\frac{\mu q}{2c(n_1)} (2 h_1^2-h_1)(h_2-h_1h_2+\vec{\ell}\cdot\vec{m}),
\end{split}
\eeno from which and the fact that
$c'(n_1)=\f{(\ga-\al)n_1}{c(n_1)},$ we deduce that \beq\label{2.21}
\begin{split}
\p_Y\bigl[ |\vec{\ell}|^2+h_1^2-h_1\bigr]
=\f{q}{4c^3(n_1)}\bigl[(c^2(n_1)-\al)(h_1+h_2-2h_1h_2)
-2(3c^2(n_1)-\al)\vec{\ell}\cdot\vec{m}\bigr]\vec{\ell}\cdot\n\\
+\bigl(\f{c'(n_1)}{2c^2(n_1)}q\ell_1+ \frac{\mu q}{2 c(n_1)}[h_2-2h_1h_2+2\vec{\ell}\cdot\vec{m}]\bigr)\bigl[
|\vec{\ell}|^2+h_1^2-h_1\bigr].
\end{split}
\eeq Similarly it follows from \eqref{2.19} that \beno
\begin{split}
\p_Y[\vec{\ell}\cdot\n]=&\p_Y\vec{\ell}\cdot\n+\vec{\ell}\cdot\p_Y\n\\
=&\f{q}{8c^3(n_1)}\bigl[(c^2(n_1)-\al)(h_1+h_2-2h_1h_2)-2(3c^2(n_1)-\al)\vec{\ell}\cdot\vec{m}\bigr](|\n|^2-1)\\
&+\f{q}{8c^3(n_1)}(h_1+h_2-2h_1h_2)\bigl[(c^2(n_1)-\al)+(\al-\ga)n_1^2\bigr]\\
&-\f{q}{4c^3(n_1)}\vec{\ell}\cdot\vec{m}\bigl[3c^2(n_1)-\al+(\al-\ga)n_1^2\bigr]\\
&+\f{q}{4c^2(n_1)}q\ell_1[\vec{\ell}\cdot\n-\vec{m}\cdot\n]+\f{q}{2c(n_1)}\vec{\ell}\cdot\vec{m}\\
&+\frac{\mu q}{4c(n_1)}\bigl[\vec\ell(h_2-2h_1h_2+2\vec{\ell}\cdot\vec{m}) -\vec{m}h_1 \bigr]\cdot\n,
\end{split}
\eeno which along with the fact that $c^2(n_1)=\al+(\ga-\al)n_1^2$
leads to \beq \label{2.22}
\begin{split}
\p_Y[\vec{\ell}\cdot\n]=\f{q}{8c^3(n_1)}\bigl[(c^2(n_1)-\al)(h_1+h_2-2h_1h_2)-2(3c^2(n_1)-\al)\vec{\ell}\cdot\vec{m}\bigr](|\n|^2-1)\\
\qquad+\f{c'(n_1)}{4c^2(n_1)}q\ell_1[\vec{\ell}\cdot\n-\vec{m}\cdot\n] +\frac{\mu q}{4c(n_1)}(h_2-2h_1h_2+2\vec{\ell}\cdot\vec{m})\, (\vec\ell\cdot\n) -\frac{\mu q h_1}{4c(n_1)}\,(\vec{m}\cdot\n).
\end{split}
\eeq On the other hand, observe that $\p_Y\n=\f{q}{2c(n_1)}\vec{m}$
is consistent with $\p_X\n=\f{p}{2c(n_1)}\vec{\ell}.$In fact, again
thanks to \eqref{2.19}, one has on the one hand \beno
\p_X[\p_Y\n]=-\f{c'(n_1)}{8c^3(n_1)}pq(\ell_1+m_1)\vec{m}-\f{\mu}{4c^2(n_1)}pq(h_1-h_1h_2+\vec{\ell}\cdot \vec{m})\vec{m}+\f{q}{2c(n_1)}\p_X\vec{m},
\eeno and on the other hand \beno
\p_Y[\p_X\n]=-\f{c'(n_1)}{8c^3(n_1)}pq(\ell_1+m_1)\vec{\ell}-\f{\mu}{4c^2(n_1)}pq (h_2-h_1h_2+\vec{\ell}\cdot \vec{m})\vec{\ell}+\f{p}{2c(n_1)}\p_Y\vec{\ell},
\eeno 
which along with the $\vec{\ell}$ equations and $\vec{m}$
equations of \eqref{2.19} shows that $\p_X[\p_Y\n]=\p_Y[\p_X\n].$ So
we can also use the equation  $\p_X\n=\f{p}{2c(n_1)}\vec{\ell},$
from which and the $\vec{m}$ equation of \eqref{2.19}, we get
\beno\begin{split}
\p_X[\vec{m}\cdot\n]=&\p_X\vec{m}\cdot\n+\vec{m}\cdot\p_X\n\\
=&\f{p}{8c^3(n_1)}\bigl[(c^2(n_1)-\al)(h_1+h_2-2h_1h_2)-2(3c^2(n_1)-\al)\vec{\ell}\cdot\vec{m}\bigr](|\n|^2-1)\\
&+\f{p}{8c^3(n_1)}\bigl[(c^2(n_1)-\al)(h_1+h_2-2h_1h_2)-2(3c^2(n_1)-\al)\vec{\ell}\cdot\vec{m}\bigr]\\
&+\f{\al-\ga}{8c^3(n_1)}pn_1^2\bigl[(h_1+h_2-2h_1h_2)-2\vec{\ell}\cdot\vec{m}\bigr]\\
&+\f{c'(n_1)}{4c^2(n_1)}pm_1[\vec{m}\cdot\n-\vec{\ell}\cdot\n]+\f{p}{2c(n_1)}\vec{\ell}\cdot\vec{m},\\
&+\frac{\mu p}{4c(n_1)}\bigl[\vec m(h_1-2h_1h_2+2\vec{\ell}\cdot\vec{m}) -\vec{\ell}h_2 \bigr]\cdot\n,
\end{split}
\eeno which gives rise to  \beq \label{2.23}
\begin{split}
\p_X[\vec{m}\cdot\n]=\f{p}{8c^3(n_1)}\bigl[(c^2(n_1)-\al)(h_1+h_2-2h_1h_2)-2(3c^2(n_1)-\al)\vec{\ell}\cdot\vec{m}\bigr](|\n|^2-1)\\
\qquad-\f{c'(n_1)}{4c^2(n_1)}pm_1[\vec{\ell}\cdot\n-\vec{m}\cdot\n]+\frac{\mu p}{4c(n_1)}(h_1-2h_1h_2+2\vec{\ell}\cdot\vec{m})(\vec m\cdot\n)  -\frac{\mu p h_2}{4c(n_1)} (\vec{\ell}\cdot\n).
\end{split}
\eeq While it is easy to observe that \beq \label{2.24}
\p_Y[|\n|^2-1]=\f{q}{c(n_1)}\vec{m}\cdot\n. \eeq Finally to control
the evolution of $|\vec{m}|^2+h_2^2-h_2,$ we get by applying
\eqref{2.19} that \beno
\begin{split}
\p_X\bigl[|\vec{m}|^2+h_2^2-h_2\bigr]=&\f{p}{4c^3(n_1)}\bigl[(c^2(n_1)-\al)(h_1+h_2-2h_1h_2)-2(3c^2(n_1)-\al)\vec{\ell}\cdot\vec{m}\bigr]\vec{m}\cdot\n\\
&+\f{\al-\ga}{4c^3(n_1)}p\bigl[(h_1+h_2-2h_1h_2)-2\vec{\ell}\cdot\vec{m}\bigr]m_1n_1\\
&+\f{c'(n_1)}{2c^2(n_1)}pm_1\bigl[(|\vec{m}|^2-\vec{\ell}\cdot\vec{m})+(h_2-\f12)(h_2-h_1)\bigr]\\
&+\frac{\mu p}{2c(n_1)}\vec m \cdot\bigl[2\vec m(h_1-h_1h_2+\vec{\ell}\cdot\vec{m})-\vec{\ell} h_2 -\vec{m}h_1 \bigr]\\
&+\frac{\mu p}{2c(n_1)}(2 h_2^2-h_2)(h_1-h_1h_2+\vec{\ell}\cdot\vec{m})
\end{split}
\eeno which leads to \beq\label{2.25}
\begin{split}
\p_X\bigl[|\vec{m}|^2+h_2^2-h_2\bigr]=
&\f{p}{4c^3(n_1)}\bigl[(c^2(n_1)-\al)(h_1+h_2-2h_1h_2)-2(3c^2(n_1)-\al)\vec{\ell}\cdot\vec{m}\bigr]\vec{m}\cdot\n\\
&+\bigl(\f{c'(n_1)}{2c^3(n_1)}pm_1+ \frac{\mu p}{2 c(n_1)}[h_1-2h_1h_2+2\vec{\ell}\cdot\vec{m}]\bigr)\bigl[|\vec{m}|^2+h_2^2-h_2\bigr]
\end{split}
\eeq Summing up \eqref{2.21} to \eqref{2.25} gives rise to
\eqref{2.20}. This completes the proof of the proposition.
\end{proof}

%
\section{Solutions in the energy coordinates}
\setcounter{equation}{0}
In this section, we prove the existence of the solution for
(\ref{2.19}) with boundary data converted from
(\ref{ID}). To avoid the confusion,
the reader should aware that in this section we solve variables $u,\ p,\ q,\ \vec\ell,\ \vec{m},\ h_1$ and $h_2$ 
by system (\ref{2.19}) instead of (\ref{1.1}) and we do not use the definitions (\ref{2.6b}) and (\ref{2.11})
except when we assign the boundary data.

The initial line $t=0$ in the $(t,x)$-plane is transformed to a
parametric curve 
\beq\label{1.33} \gamma: \quad Y = \varphi (X) \eeq
in the $(X, Y)$ plane,  where $Y = \varphi( X )$ if and only if
there is an $x$ such that \beq\label{1.34} \left\{
\begin{array}{rcl}
X &=& \int_0^x [1+|\vec{R}|^2(0,y)]\,dy, \\[3mm]
 Y &=& \int_x^0[1+|\vec{S}|^2(0,y)]\,dy.
 \end{array}\right.
\eeq The curve is non-characteristic. We introduce 
\beq\label{E.0}
\mathcal{E}_0
~\eqdefa~ \frac14\int \big[ |\vec{R}|^2(0,y)+|\vec{S}|^2(0,y)\big]\,dy<\infty. 
\eeq 
It equals to the number in (\ref{1.5E}) with $t=0$. The
two functions $X= X(x), Y = Y(x)$ from (\ref{1.34}) are well-defined
and absolutely continuous, provided that \eqref{ID} is satisfied. 
So
$\varphi (X)$ is continuous and strictly decreasing on $X$ since $X(x)$ is strictly
increasing while $Y(x)$ is strictly decreasing.  From
(\ref{E.0}) it follows \beq \big|X+\varphi (X)\big|  \leq 4 {\mathcal
E}_0\,.\label{xp} \eeq As $(t,x)$ ranges over the domain
$\mathbb R^+\,\times{\mathbb R}$, the corresponding variables $(X,
Y)$ range over the set \beq \Omega^+ := \big\{ (X,Y)\,; ~~Y\geq
\varphi(X)\big\}\,.\label{2.27} \eeq Along the curve
$$
\gamma :=   \big\{ (X,Y)\,;~~Y=\varphi (X)\big\}\subset {\mathbb
R}^2
$$
parametrized by $x\mapsto \big(X(x), \,Y(x)\big)$, we can thus
assign the boundary data $(\bar{\vec{\ell}}, \bar{\vec{m}}, \bar h_1, 
\bar h_2, \bar p, \bar q, \bar\n)$ $\in
L^\infty$ defined by their definition evaluated at the initial data
(\ref{ID}), i.e., \beq
\begin{split}
&{\bar \n}  =  \n_0(x)\,, \quad {\bar p} = 1\,,\quad {\bar q} = 1\,,\\
&\bar{\vec{\ell}}= \vec{R}(0,x)\bar h_1,\quad \bar{\vec{m}}=\vec{S}(0, x)\bar h_2,\\
&\bar h_1 = \frac1{1+|\vec{R}|^2(0, x)}, \qquad\mbox{and}\\
&\bar h_2 = \frac1{1+|\vec{S}|^2(0, x)}.
\end{split} \label{2.28}\eeq where 
\[
 \vec{R}(0, x) =  \n_1(x)+c({n_1}_0(x))\n_0'(x),   \quad  \vec{S}(0, x) =  \n_1(x)-c({n_1}_0(x))\n_0'(x).
 \]

We consider solutions to the boundary value problem
(\ref{2.19})(\ref{2.28})(\ref{n1}).

\begin{thm} The problem
(\ref{2.19})(\ref{2.28})(\ref{ID})(\ref{n1}) has a unique global solution defined for
all $(X, Y)\in \Omega^+$.
\end{thm}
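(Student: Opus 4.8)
The plan is to solve the semilinear system \eqref{2.19} with boundary data \eqref{2.28} on the characteristic curve $\gamma: Y=\varphi(X)$ by a standard Picard iteration, first on a bounded rectangle and then globally, exploiting the fact that the unknowns $\vec\ell,\vec m,h_1,h_2,\n$ are uniformly bounded a priori (norm $\le 1$ for the first four, and $|\n|=1$) while $p,q$ are controlled through the energy. First I would fix a rectangle $[X_0,X_1]\times[Y_0,Y_1]$ meeting $\Omega^+$ and set up the iteration: integrating each $\p_X(\cdot)$ equation in $X$ from the boundary curve and each $\p_Y(\cdot)$ equation in $Y$ from the boundary curve produces a fixed-point map on $C^0$ of the rectangle. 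The right-hand sides of \eqref{2.19} are smooth (rational) functions of the unknowns with denominators bounded below since $C_L<c(n_1)<C_U$ by \eqref{1.0c}; the only unbounded factors are $p$ and $q$. So the crucial preliminary is an a priori $L^\infty$ bound on $p$ and $q$.

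The key step is therefore the bound on $p,q$. From the last two equations of \eqref{2.19}, $p_Y = \frac{pq}{2c(n_1)}[\,\cdots\,]$ and $q_X = \frac{pq}{2c(n_1)}[\,\cdots\,]$, where the bracketed factors are bounded by a constant $C$ (using $|\vec\ell|,|\vec m|,|h_i|\le1$, \eqref{1.0c}, and $\mu\ge0$ fixed). Crucially, from \eqref{2.6b} and the energy equation \eqref{2.3}, the quantity $\int p\,dX$ along a horizontal line and $\int q\,dY$ along a vertical line are controlled: in the $(X,Y)$ coordinates the energy $\frac14\int(|\vec R|^2+|\vec S|^2)\,dx$ transforms so that $\iint (\text{something}) $ stays $\le \mathcal E_0$, in particular the total variation of $X_x^{-1}$-type weights is finite. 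Concretely one shows $\p_Y(pq)$ is controlled by $Cpq$ while simultaneously $p$ integrated in $X$ and $q$ integrated in $Y$ are a priori finite (this is exactly what ``energy-dependent coordinates'' buys us, as the paper emphasizes right after \eqref{2.11}). A Gronwall argument in the rectangle then gives $p,q\le C(\mathcal E_0, \mu, X_1-X_0, Y_1-Y_0)$. I would also verify $p,q>0$ stays preserved (they start at $1$ and the equations are linear in $p$, resp.\ $q$, so positivity propagates).

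With $p,q$ bounded, all right-hand sides of \eqref{2.19} become globally Lipschitz in the unknowns on the relevant bounded set, so the contraction mapping principle gives a unique solution on any bounded sub-rectangle of $\Omega^+$; since the bounds depend only on the size of the rectangle and on $\mathcal E_0,\mu$, these local solutions patch together (by uniqueness on overlaps) to a unique global solution on all of $\Omega^+$. Finally I would invoke Proposition \ref{prop2.1}: since the boundary data \eqref{2.28} satisfy the algebraic relations $\vec\ell\cdot\n=\vec m\cdot\n=0$, $|\n|=1$, $|\vec\ell|^2+h_1^2=h_1$, $|\vec m|^2+h_2^2=h_2$ on $\gamma$, these persist for all $(X,Y)$, which in particular keeps $|\vec\ell|,|\vec m|,|h_i|\le 1$ consistent and confirms $\n\in\mathbb S^2$.

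I expect the main obstacle to be making the $p,q$ estimate precise: one must carefully rewrite the energy identity \eqref{2.3} in the $(X,Y)$ variables to see exactly which integral of $p$ (resp.\ $q$) is dominated by $\mathcal E_0$, and then feed that into a Gronwall inequality for $pq$ along characteristics of the already-linear transport structure; the interplay between the ``integrated in one variable'' bound and the ``pointwise Gronwall in the other variable'' bound is the delicate point, and the non-negative $\mu$ terms must be checked to have the right sign (dissipative, hence harmless) so they do not spoil the estimate. Everything else — Lipschitz continuity of the nonlinearities, the contraction argument, and patching — is routine once that bound is in hand.
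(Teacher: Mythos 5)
Your proposal follows the same strategy as the paper: local existence from Lipschitz right-hand sides, uniform bounds on $\vec{\ell},\vec{m},h_1,h_2,\n$ via the conserved quantities of Proposition~\ref{prop2.1}, an integral bound on $p,q$ fed into a Gronwall-type pointwise estimate, and patching local solutions by uniqueness. One imprecision worth correcting: you write that $\partial_Y(pq)$ is controlled by $Cpq$, but system \eqref{2.19} gives no equation for $q_Y$, so $\partial_Y(pq)=p_Y q+pq_Y$ is not directly computable. What the paper actually exploits --- and what your remark about the dissipative sign of the $\mu$-terms is pointing at --- is the identity obtained by \emph{adding} the $p_Y$ and $q_X$ equations and using \eqref{2.20}:
\[
p_Y+q_X=-\frac{\mu\,p q\,h_1 h_2}{2c(n_1)}\,\Bigl|\frac{\vec{\ell}}{h_1}+\frac{\vec{m}}{h_2}\Bigr|^2\le 0.
\]
Integrating this over the characteristic triangle with vertex $(X,Y)$ and using $\bar p=\bar q=1$ on the data curve $\gamma$ (together with \eqref{xp}) yields the integral bound
$\int_{\varphi^{-1}(Y)}^{X}p\,dX'+\int_{\varphi(X)}^{Y}q\,dY'\le 2\bigl(|X|+|Y|+4\mathcal E_0\bigr)$;
then, since $p_Y=p\cdot q\cdot(\text{bounded})$, the exponential formula $p(X,Y)=\exp\bigl\{\int_{\varphi(X)}^Y q(X,Y')\cdot(\text{bounded})\,dY'\bigr\}$ closes the pointwise estimate and gives $p,q>0$ for free. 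So the integral bound comes not from transforming the energy identity \eqref{2.3} alone (that only controls $p(1-h_1)$ and $q(1-h_2)$, i.e.\ the energy part, not the measure part), but from the $p_Y+q_X\le 0$ identity together with the normalization $\bar p=\bar q=1$. With that substitution, the argument you outlined is the paper's.
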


\noindent{\it Sketch of Proof}. 
Noticing that all equations in \eqref{2.19} have a locally
Lipschitz continuous right hand side, the construction of a local
solution as fixed point of a suitable integral transformation is
straightforward. 
Note the consistency condition $\p_X[\p_Y\n]=\p_Y[\p_X\n]$ proved in the previous section 
shows that we can use either the equation for $\p_Y\n$
or the one for $\p_X\n$ to find the solution.

To make sure that this solution is actually defined
on $\Omega^+$, one must establish {\it a priori}
bounds, showing that the solution remains bounded on bounded subset of $\Omega^+$.

By (\ref{2.20}), we have
\beq\label{1.1618} h_1(1-h_1)\geq0, \quad h_2(1-h_2)\geq0. \eeq 
Thus $h_1, h_2$ are bounded
between zero and one, and $|\vec{\ell}|$ and $|\vec{m}|$ are
both uniformly bounded by (\ref{2.20}). 

By the $p$ and $q$ equations in (\ref{2.19}) and (\ref{2.20}), we have 
\beq
p_Y+q_X=-\frac{\mu p q h_1 h_2}{2c(n_1)}\, \, |\frac{\vec\ell}{h_1}+\frac{\vec m}{h_2}|^2\leq0
\eeq
which implies that 
\beno \int_{\varphi^{-1}(Y)}^X p(X', Y)dX' +
\int_{\varphi(X)}^Yq(X, Y')dY' \leq X - \varphi^{-1}(Y)+ Y - \varphi(X)
\eeno where $\varphi^{-1}$ denotes the inverse of $\varphi$,
following an integration over the characteristic triangle with
vertex $(X, Y)$. Thus, by the energy assumption (\ref{E.0}), we find
\beq\label{1pq} \int_{\varphi^{-1}(Y)}^X p(X', Y)dX' +
\int_{\varphi(X)}^Yq(X, Y')\,dY' \leq 2(|X|+|Y|+4{\mathcal E}_0). \eeq
Integrating the $p$ equation in \eqref{2.19} vertically and use the bound on $q$
from (\ref{1pq}), we find \beq\label{1p}
\begin{array}{rcl}
p(X, Y) &= &\exp\left\{\int_{\varphi(X)}^Y \frac{q(X, Y')}{2c(n_1)}\bigl[-\frac{c'(n_1)}
{2c(n_1)}\big( \ell_1-m_1\big)-\mu (h_2-h_1h_2+\vec{\ell}\cdot \vec{m})\bigr]\,dY'\right\}
\\[3mm]
&\leq & \exp\left\{C_0\int_{\varphi(X)}^Yq(X, Y')\,dY'\right\}\\[3mm]
&\leq& \exp\{2C_0(|X|+|Y|+4{\mathcal E}_0)\}.
\end{array}
\eeq Here $C_0$ represents a finite number. Similarly, we have
\beq\label{1q} q(X, Y) \leq \exp\{2C_0(|X|+|Y|+4{\mathcal E}_0)\}. \eeq

Relying on the local bounds (\ref{1p})(\ref{1q}), the local solution
to (\ref{2.19})(\ref{2.28})(\ref{ID})(\ref{n1}) 
can be extended to $\Omega^+$
One
may consult paper \cite{BZ} for details. This completes the sketch
of the proof.



\begin{col}\label{col1}
If the initial data $(\n_0, \n_1)$ are smooth,
the solution $U:=(\n,p,q,\vec{\ell}, \vec{m}, h_1, h_2)$
of (\ref{2.19})(\ref{2.28})(\ref{ID})(\ref{n1}) is a smooth function of the variables
$(X,Y)$. Moreover, assume that a sequence of smooth functions
$(\n_0^i, \n_1^i)_{i\geq 1}$ satisfies \beno \n_0^i\to \n_0\,,\qquad (\n_0^i)_x\to (\n_0)_x\,,\qquad \n_1^i\to \n_1 \eeno uniformly on compact subsets
of ${\mathbb R}$. Then one has the convergence of the corresponding
solutions:
$$(\n^i,p^i, q^i,{\vec{\ell}}^i, {\vec{m}}^i,h_1^i,h_2^i)\to U$$
uniformly on bounded subsets of $\Omega^+$.
\end{col}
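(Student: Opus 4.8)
\textbf{Proof proposal for Corollary \ref{col1}.}

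The plan is to treat the smoothness and the continuous-dependence assertions by the same device: the solution $U=(\n,p,q,\vec\ell,\vec m,h_1,h_2)$ of \eqref{2.19}\eqref{2.28} is obtained as the unique fixed point of an integral operator built from the right-hand sides of \eqref{2.19}, and this operator depends smoothly on parameters. Concretely, once the a priori bounds from the previous theorem are in hand, on any bounded box $\cB\subset\Omega^+$ the unknowns $p,q$ are bounded by \eqref{1p}\eqref{1q}, while $h_1,h_2\in[0,1]$ and $|\vec\ell|,|\vec m|\le1$ by \eqref{2.20}; hence every nonlinearity in \eqref{2.19} is evaluated on a fixed compact set on which it is $C^\infty$, and all of its derivatives are bounded there. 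First I would fix such a box $\cB$ and rewrite \eqref{2.19} together with the boundary assignment \eqref{2.28} as a system of integral equations along the characteristic directions $X$ and $Y$ emanating from the initial curve $\gamma: Y=\varphi(X)$; the contraction estimate already used to produce the solution shows the fixed point is unique and depends Lipschitz-continuously, in sup norm on $\cB$, on the boundary datum in $L^\infty(\gamma)$.

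For the smoothness claim, I would differentiate the integral system formally in $X$ and in $Y$. Each derivative $\p_X U,\p_Y U$ satisfies a \emph{linear} integral system whose coefficients are the (bounded) partial derivatives of the nonlinearities in \eqref{2.19} evaluated at $U$, with inhomogeneous term coming from the $X$- (resp.\ $Y$-) derivative of the boundary data along $\gamma$; since $\gamma$ is a smooth non-characteristic curve and $\n_0,\n_1$ are smooth, the boundary data for $U$ and for all its tangential derivatives along $\gamma$ are smooth. A standard bootstrap — at each stage the already-controlled lower-order derivatives supply smooth, bounded data for a linear integral system governing the next-order derivative, which is then solved by the same contraction/Gronwall argument on $\cB$ — gives $U\in C^\infty(\Omega^+)$. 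This is the routine part and I would only sketch it, citing \cite{BZ} for the analogous argument.

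For the convergence statement, let $U^i$ be the solution with datum $(\n_0^i,\n_1^i)$. From $\n_0^i\to\n_0$, $(\n_0^i)_x\to(\n_0)_x$, $\n_1^i\to\n_1$ uniformly on compacts, one checks via \eqref{1.34}\eqref{2.28} that the initial curves $\gamma^i:Y=\varphi^i(X)$ converge to $\gamma$ (uniformly on compact $X$-intervals, using \eqref{xp} for the needed equicontinuity) and the boundary data $(\bar{\vec\ell}^{\,i},\bar{\vec m}^{\,i},\bar h_1^i,\bar h_2^i,\bar p^i,\bar q^i,\bar\n^i)$ converge in $L^\infty_{\rm loc}(\gamma)$ to those of $U$. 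The energy $\cE_0^i$ stays bounded, so by \eqref{1p}\eqref{1q} the bounds on $p^i,q^i$ over any fixed box $\cB$ are \emph{uniform} in $i$; thus all $U^i$ (for $i$ large) solve the integral system on $\cB$ with uniformly bounded data and with nonlinearities restricted to one fixed compact set. Writing the integral equation for $U^i-U$ and using the uniform Lipschitz bound on the nonlinearities there, Gronwall's inequality in the characteristic variables bounds $\|U^i-U\|_{L^\infty(\cB)}$ by a constant (depending only on $\cB$ and the uniform energy bound) times the distance of the data, which tends to $0$. Hence $U^i\to U$ uniformly on $\cB$, i.e.\ on bounded subsets of $\Omega^+$.

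The main obstacle is bookkeeping at the boundary rather than in the interior: one must verify that convergence of $(\n_0^i,\n_1^i)$ in the stated sense really does force convergence of the reparametrized curves $\gamma^i\to\gamma$ and of the transplanted boundary data, since the change of variables \eqref{1.34} itself depends on the data through $|\vec R|^2,|\vec S|^2$. Handling the slightly moving, slightly non-aligned initial curves uniformly — so that the same box $\cB\subset\Omega^+$ is legitimately in the domain of every $U^i$ for large $i$ and the characteristic-triangle integrations in \eqref{1pq}–\eqref{1q} apply with $i$-independent constants — is the delicate point; everything downstream is the standard linear-Gronwall continuity argument for hyperbolic integral systems, for which I would again refer to \cite{BZ}.
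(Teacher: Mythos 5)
Your proposal is correct and follows essentially the route the paper relies on: the paper gives no explicit proof of this corollary, instead pointing to \cite{BZ}, where the argument is exactly as you describe — integral formulation along the two characteristic directions, uniform a priori bounds on $p,q,h_1,h_2,\vec\ell,\vec m$ on bounded subsets of $\Omega^+$, a Gronwall-type contraction estimate for continuous dependence on the boundary data, and a derivative bootstrap for smoothness. You also correctly flag the one genuinely delicate point (the initial curve $\gamma$ itself moves with the data through \eqref{1.34}), which is the part that actually requires care in making the Gronwall comparison over a common box.
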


\section{Inverse transformation}
\setcounter{equation}{0}

By expressing the solution $\mathbf n(X,Y)$ in terms of the original
variables $(t,x)$, we shall recover a solution of the Cauchy problem
\eqref{1.1}$\sim$\eqref{n1}. This will prove Theorem
\ref{1.1thm}, except the estimate $ {\mathcal{E}}(t)\leq{\mathcal{E}}_0$ 
which will be proved in the next section.

Using \eqref{2.6a} by letting $f=t$ or $x$, we have equations:
	\beq\label{3.1} 
 		t_X = \frac{p h_1}{2c}, \quad
		t_Y = \frac{q h_2}{2c}, \quad 
		x_X =\frac{ p h_1}{2}, \quad 
		x_Y = \frac{-q h_2}{2}. 
	\eeq
In fact, we only need one of the two equations of $t_X$ and $t_Y$, 
which are consistent since $t_{XY} =
t_{YX}$. The same is true for $x$.
We integrate \eqref{3.1} with data $t=0, x=x$ on $\gamma$ to find
$t = t(X, Y), x = x(X, Y)$, which exist for all $(X, Y)$ in
$\Omega^+$.

We need the inverse functions $X = X(t, x), Y = Y(t,x)$. The inverse
functions do not exist as a one-to-one correspondence between $(t,
x)$ in ${\mathbb R}^+\times\mathbb R$ and $(X, Y)$ in $\Omega^+ $. There may be
a nontrivial set of points in $\Omega^+ $ that maps to a
single point $(t, x)$. To investigate it, we find the partial
derivatives of the inverse mapping, valid at points where $h_1\ne 0,
h_2\ne 0$, 
	\beq\label{inverse} 
		X_t = \frac{c}{ph_1},\quad 
		Y_t = \frac{c}{qh_2},\quad 
		X_x = \frac1{ph_1},\quad 
		Y_x = -\frac1{qh_2}.
	\eeq 
Thus (\ref{2.6}) holds and so does (\ref{2.6a}) for our
solution.

We first examine the regularity of the solution constructed in the previous Section. Since
the initial data $({\mathbf n}_0)_x, {\mathbf n}_1$ etc. are only assumed to be in
$L^2$, the functions $U$ may well be discontinuous. More precisely,
on bounded subsets of the $\Omega^+$, the solutions satisfy the
following:
\begin{itemize}
\item The functions $\ell_1, \ell_2,,  \ell_3, h_1, p$ are Lipschitz continuous w.r.t.~$Y$,
measurable w.r.t.~$X$.
\item The functions $m_1, m_2, m_3, h_2, q$ are Lipschitz continuous w.r.t.~$X$,
measurable w.r.t.~$Y$.
\item The vector field ${\mathbf n}$ is Lipschitz continuous w.r.t.~both $X$ and $Y$.
\end{itemize}

In order to define $\mathbf n$ as  a vector field of the original variables
$t,x$, we should formally invert the map $(X,Y)\mapsto (t,x)$ and
write ${\mathbf n}(t,x)={\mathbf n}\big( X(t,x)\,,Y(t,x)\big)$. The fact that
the above map may  not be one-to-one does not cause any real
difficulty. Indeed, given $(t^*,x^*)$, we can choose an arbitrary
point $(X^*,Y^*)$ such that $t(X^*,Y^*)=t^*$, $x(X^*,Y^*)=x^*$, and
define ${\mathbf n}(t^*,x^*)={\mathbf n}(X^*,Y^*)$. To prove that the values
of $\mathbf n$ do not depend on the choice of $(X^*,Y^*)$, we proceed
as follows. Assume that there are two distinct points such that
$t(X_1, Y_1)=t(X_2,Y_2)=t^*$, $~x(X_1,Y_1)=x(X_2,Y_2)=x^*$. We
consider two cases:

\noindent Case 1: $X_1\leq X_2$, $Y_1\leq Y_2$. Consider the set
$$
\Gamma_{x^*} := \Big\{ (X,Y)\,;~~x(X,Y)\leq x^*\Big\}
$$
and call $\partial \Gamma_{x^*}$ its boundary. By (\ref{3.1}), $x$
is increasing with $X$ and decreasing with $Y$. Hence, this boundary
can be represented as the graph of a Lipschitz continuous function:
$X-Y=\phi(X+Y)$. We now construct the Lipschitz continuous curve
$\gamma$ consisting of
\begin{itemize}
\item a horizontal segment joining $(X_1,Y_1)$ with a point $A=(X_A,Y_A)$
on $\partial \Gamma_{x^*}$, with $Y_A=Y_1$,

\item a portion of the boundary $\partial \Gamma_{x^*}$,

\item a vertical segment joining $(X_2,Y_2)$ to a point $B=(X_B,Y_B)$
on $\partial \Gamma_{x^*}$, with $X_B=X_2$.
\end{itemize}
\noindent Observe that the map $(X,Y)\mapsto (t,x)$ is constant
along $\gamma$. By (\ref{3.1}) this implies $h_1=0$ on the
horizontal segment, $h_2=0$ on the vertical segment, and $h_1=h_2=0$
on the portion of the boundary $\partial \Gamma_{x^*}$. When either
$h_1=0$ or $h_2=0$ or both, we have from the conserved quantities
\eqref{2.21} that $\vec{\ell}=0$ or $\vec{m} =0$ or
both, correspondingly. Upon examining the derivatives of $\mathbf n$ in \eqref{2.19},
we have the same pattern of vanishing property. Thus, along the path
from $A$ to $B$, the values of the components of $\mathbf n$ remain constant, proving our claim.

\noindent Case 2: $X_1\leq X_2$, $Y_1\geq Y_2$.   In this case, we
consider the set
$$
\Gamma_{t^*} := \Big\{ (X,Y)\,;~~t(X,Y)\leq t^*\Big\}\,,
$$
and construct a  curve $\gamma$ connecting $(X_1,Y_1)$ with
$(X_2,Y_2)$ similarly as in case 1.  Details are entirely similar to
Case 1.

We now prove that the function ${\mathbf n}(t,x)=\mathbf n\big(X(t, x), Y(t,
x)\big)$ thus obtained are H\"older continuous on bounded sets.
Toward this goal, consider any characteristic curve, say  $t\mapsto
x^+(t)$, with $d x^+/dt =c(n_1)$.   By construction, this is
parametrized by the function $X\mapsto \big(t(X,\ov Y),\, x(X,\ov
Y)\big)$, for some fixed $\ov Y$. Using the chain rule and the
inverse mapping formulas (\ref{inverse}),
we obtain 
	\beno 
		{\mathbf n}_t+c{\mathbf n}_x = {\mathbf n}_X(X_t+cX_x)+{\mathbf n}_Y(Y_t+cY_x) = 2c X_x {\mathbf n}_X.
	\eeno 
Thus we have 
	\beq
		\begin{array}{rcl}
			\int_0^\tau \big| {\mathbf n}_t+c {\mathbf n}_x\big|^2\,dt 
			&= &\int_{X_0}^{X_\tau}(2cX_x|{\mathbf n}_X|)^2\,(2X_t)^{-1} dX\\[4mm]
			&=& \int_{X_0}^{X_\tau} (2c\frac1{ph_1}\frac{p |\vec{\ell}|}{2c})^2\frac{ph_1}{2c}\,dX \\[4mm]
			& = & \int_{X_0}^{X_\tau}{\frac{p}{2c}}\frac{ |\vec{\ell}|^2}{h_1}\,dX ~\leq ~\int_{X_0}^{X_\tau}{\frac{p}{2c}}\,dX~\leq ~C_\tau\,,
		\end{array}\label{4.4BZ}
	\eeq 
for some constant $C_\tau$ depending only on $\tau$. Notice we
have used  $ |\vec{\ell}|^2\leq h_1$, which follows from \eqref{2.20}.
Similarly, integrating along any backward characteristics $t\mapsto
x^-(t)$ we obtain 
\beq 
\int_0^\tau \big| {\mathbf n}_t-c {\mathbf n}_x\big|^2\,dt\leq
C_\tau. \label{4.5BZ} 
\eeq 
Since the speed of characteristics is
$\pm c(n_1)$, and $c(n_1)$ is uniformly positive and bounded, the bounds
(\ref{4.4BZ})-(\ref{4.5BZ}) imply that the function ${\mathbf n}={\mathbf n}(t,x)$ 
is H\"older continuous with exponent $1/2$.  In turn, this implies that
all characteristic curves are $\C^1$ with H\"older continuous
derivative. In addition, from (\ref{4.4BZ})-(\ref{4.5BZ})
it follows that $\vec{R}, \vec{S}$ at \eqref{2.1}
are square integrable on bounded subsets of the $t$-$x$ plane.
However, we should check the consistency that $\vec{R}, \vec{S}$ at \eqref{2.1} are indeed the same as recovered from
(\ref{2.11}). Let us check only one of them, $R = \vec{\ell}/h_1$. We
find
$$
{\mathbf n}_t+c{\mathbf n}_x = 2cX_x{\mathbf n}_X
=2c\frac1{ph_1}\frac{p\vec{\ell}}{2c}=\frac{\vec{\ell}}{h_1}.
$$

Finally, we prove that ${\mathbf n}$ satisfies the
equations of system \eqref{1.1} in distributional sense, according
to (iii) of Definition 1.1. We note that 
\beq
\begin{array}{rcl}
&& \dint 2\big[\phi_t {\mathbf n}_t - \phi_x c^2 {\mathbf n}_x\big] +2\mu \phi_t  \n\,dxdt \\[4mm]
&&=\dint \phi_t\big[({\mathbf n}_t+c{\mathbf n}_x)+({\mathbf n}_t-c{\mathbf n}_x)\big]\\
    &&\qquad -c\phi_x\big[({\mathbf n}_t+c{\mathbf n}_x)-({\mathbf n}_t-c{\mathbf n}_x)\big]+2\mu \phi_t  \n\,dxdt\\[4mm]
&&=\dint \big[\phi_t -c\phi_x\big]\,({\mathbf n}_t+c{\mathbf n}_x)\\
&&\quad+\dint \big[\phi_t
    +c\phi_x\big]\,({\mathbf n}_t-c{\mathbf n}_x)+2\mu \phi_t  \n\,dxdt\\[4mm]
&&=\dint \big[\phi_t -c\phi_x\big]\,\vec{R}\,dxdt+\dint \big[\phi_t
    +c\phi_x\big]\,\vec{S}\,dxdt+\dint 2\mu \phi_t  \n\,dxdt\,.
\end{array}\label{4.6BZ}
\eeq 
By \eqref{2.6a}, this is equal to 
\beq \dint
\big[-2cY_x \phi_Y \,\vec{R}~+~2cX_x\phi_X \,\vec{S}~+~2\mu c(X_x \phi_X-Y_x \phi_Y )\n\big]\,dxdt\,.
\label{4.7BZ} \eeq Using the Jacobian 
	\beq\label{J} 
		\frac{\partial(x, t)}{\partial(X, Y)} = \frac{pqh_1h_2}{2c} 
	\eeq
derived by
\eqref{3.1},
and the inverse
(\ref{inverse}), it is equal to 
\beq 
\begin{split} &\dint
\Big[\frac{2c}{qh_2}\vec{R}\phi_Y + \frac{2c}{ph_1}\vec{S}\phi_X + \frac{2\mu c}{p h_1} \phi_X + \frac{2\mu c}{q h_2} \phi_Y) \Big]\frac{pqh_1h_2}{2c}\,dXdY\\
=&\dint
\big[ph_1\vec{R}\phi_Y+qh_2\vec{S}\phi_X +\mu q h_2 \n \phi_X+\mu p h_1 \n \phi_Y
\big]\,dXdY
\\
=&\dint
\big[p\vec{\ell}\phi_Y+q\vec{m}\phi_X +\mu q h_2 \n \phi_X+\mu p h_1 \n \phi_Y
\big]\,dXdY
\\
=&\dint
\big[-(p\vec{\ell})_Y-(q\vec{m})_X -\mu(q h_2 \n)_X - \mu (p h_1 \n)_Y
\big]\phi\,dXdY.
\end{split} \label{formulation} \eeq

Thanks to \eqref{2.1}, \eqref{2.11}  and \eqref{2.19}, we have that the first component of the integrand equals to 
\beno \begin{split}
&\phi\bigl[-(p\ell_1)_Y-(q m_1)_X -\mu(q h_2 \n)_X - \mu (p h_1 \n)_Y\bigr]\\
=&-\phi\f{pq}{4c^3}\bigl[(c^2-\ga)(h_1+h_2-2h_1h_2)-2(3c^2-\ga)\vec{\ell}\cdot\vec{m}\bigr]n_1\\
=&-\phi\f{p q h_1 h_2}{2c}\bigl[\frac{c^2-\ga}{2c^2}(\frac{1}{h_2}+\frac{1}{h_1}-2)-\frac{3c^2-\ga}{c^2}\frac{\vec{\ell}\cdot\vec{m}}{h_1 h_2}\bigr]n_1
\\
=&-\phi\f{p q h_1 h_2}{2c}\bigl[\frac{c^2-\ga}{2c^2}(|\vec{R}|^2+|\vec{S}|^2)-\frac{3c^2-\ga}{c^2}\vec{R}\cdot\vec{S}\bigr]n_1
\\
=&-\phi\f{p q h_1 h_2}{2c}\bigl[\frac{c^2-\ga}{c^2}(|\n_t|^2+c^2|\n_x|^2)-\frac{3c^2-\ga}{c^2}(|\n_t|^2-c^2|\n_x|^2)\bigr]n_1.
\\=&-2\phi\f{p q h_1 h_2}{2c}\bigl(-|\n_t|^2+(2c^2-\ga)|\n_x|^2\bigr)n_1,
\end{split} \eeno  
which implies that the first equation in \eqref{1.1} holds
in integral form. Similarly, ${\mathbf n}$ satisfies the
second and third equations of system \eqref{1.1} in distributional sense.


\section{Upper bound on energy} 
\setcounter{equation}{0}

We convert the energy equation \eqref{1.4a2} formally to the
$(X,Y)$-plane to look for the upper bound of energy.

Recall that the energy equation  \eqref{1.4a2} can be written as   \eqref{2.3} in terms of 
$|\vec{R}|$ and $|\vec{S}|$.
By the variables (\ref{2.11}), we rewrite the equation \eqref{2.3} as 
\beq\label{e_ineq}
\left(\frac1{4h_1}+\frac1{4h_2}-\frac12\right)_t -
\left[\frac{c}{4}(\frac1{h_1} - \frac1{h_2})\right]_x \leq0. 
\eeq 
We can write the $1$-form 
\beq\label{5.3closed}
\left(\frac1{4h_1}+\frac1{4h_2}-\frac12\right)dx
+\left[\frac{c}{4}(\frac1{h_1} - \frac1{h_2})\right]dt \eeq
 as 
\beq\label{energyXY} \frac{p(1-h_1)}{4}dX -
\frac{q(1-h_2)}{4}dY, 
\eeq 
by the formula 
\beq\label{dxdt}
\begin{array}{rcccl}
dt & = & t_XdX+ t_YdY & = & \frac{ph_1}{2c}dX + \frac{qh_2}{2c}dY\\[2mm]
dx & = & x_XdX+ x_YdY & = & \frac{ph_1}{2}dX - \frac{qh_2}{2}dY.
\end{array}
\eeq 
By \eqref{2.19} then by \eqref{2.20}, we have 
\ben
\frac 1 4\big[ \bigl(p(1-h_1)\bigl)_Y +
\bigl(q(1-h_2)\bigl)_X\big]&=&-\frac{pq\mu}{2c}(h_1+h_2-2h_1h_2+2 \vec\ell\cdot\vec m)\nn\\
&=&-\frac{pq\mu}{2c}(|\vec\ell+\vec m|^2+(h_1-h_2)^2)\label{5.3b}\\
&\leq& 0.\nn 
\een
\begin{figure}[htb]
\centering
\includegraphics[width=0.45\textwidth]{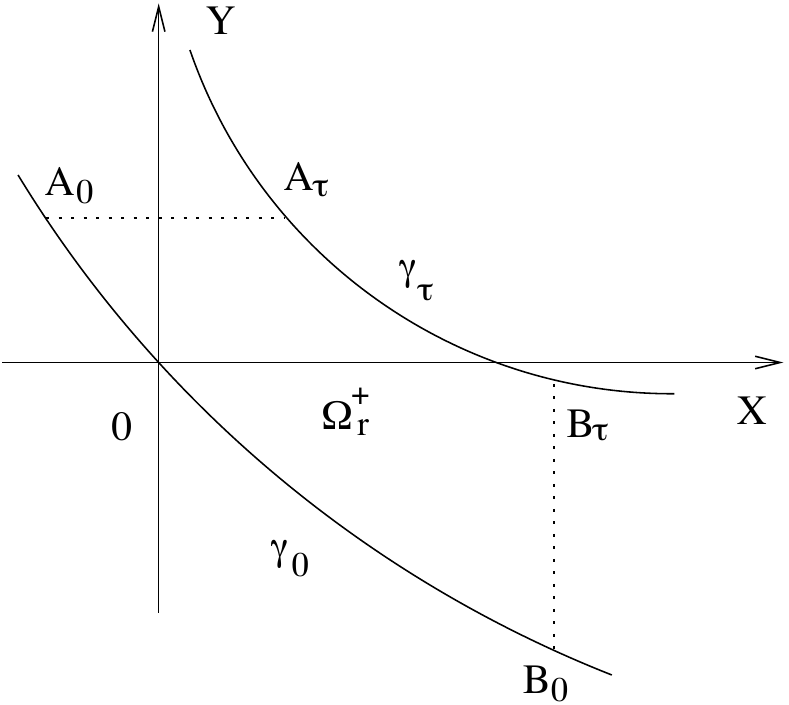}
\caption{Energy conservation } \label{f1}
\end{figure}

We use (\ref{energyXY}) to prove that the 
\beq\label{1.7BZ}
{\mathcal{E}}(t)\leq{\mathcal{E}}_0 \quad \mbox{for\ any}\quad t>0. 
\eeq
Fix $\tau>
0,$ and $ r>> 1$. Define the set 
\beq\label{Omega} \Omega^+_r := \Big\{
(X,Y)\,;~~0\leq t(X,Y)\leq \tau\,,\quad X\leq r\,, ~~Y\leq r\Big\}.
\eeq See Figure \ref{f1}, where segment $A_\tau A_0$ is where $Y=r$ while
segment $B_\tau B_0$ is where $X=r$. By construction, the map $(X,Y)\mapsto
(t,x)$ will act as follows:
$$
A_\tau\mapsto (\tau, a)\,,\qquad B_\tau\mapsto (\tau, b)\,,\qquad
B_0\mapsto(0,c)\,, \qquad A_0\mapsto (0,d)\,,
$$
for some $a<b$ and $d<c$. 
By the divergence theorem and \eqref{5.3closed}, the integral of the form
(\ref{energyXY}) along the curve $A_\tau\rightarrow B_\tau\rightarrow B_0\rightarrow  A_0 $ is less than or equal to zero.
Integrating the 1-form (\ref{energyXY})
along the boundary of $\Omega_r^+$ we obtain \beq\label{5.7BZ}
\begin{array}{lll}
 &&\int_{A_\tau B_\tau} \frac{p(1-h_1)}{4}\,dX  - \frac{q(1-h_2)}{4}\,dY\, \\[4mm]
&\leq&\int_{A_0 B_0} \frac{p(1-h_1)}{4}\,dX -\frac{q(1-h_2)}{4}\,dY\,
-\int_{A_0 A_\tau} \frac{p(1-h_1)}{4}\,dX - \int_{B_0 B_\tau}\frac{q(1-h_2)}{4}\,dY\\[4mm]
&\leq& \int_{A_0 B_0} \frac{p(1-h_1)}{4}\,dX- \frac{q(1-h_2)}{4}\,dY\, \\[4mm]
&=&\int_d^c \frac12\Big[ |{\n}_t|^2(0,x)+c^2\big({n_1}(0,x)\big)\,|\n_x|^2(0,x)\Big]\,dx
\,.
\end{array}
\eeq On the other hand, we use (\ref{dxdt}) to compute
\beq\label{5.8BZ}
\begin{split}
&\int_a^b \frac12\Big[  |{\n}_t|^2(\tau,x)+c^2\big({n_1}(\tau, x)\big)\,|\n_x|^2(\tau,x)\Big]\,dx \\[2mm]
&=\int_{A_\tau B_\tau\cap\{h_1\ne 0\}} \frac{p(1-h_1)}{4}dX -
\int_{A_\tau B_\tau\cap\{h_2\ne 0\}}\frac{q(1-h_2)}{4}dY \leq {\mathcal{E}}_0.
\end{split}
\eeq 
Letting $r\to +\infty$ in (\ref{Omega}),
one has $a\to -\infty$, $b\to +\infty$. Therefore (\ref{5.7BZ}) and
(\ref{5.8BZ}) together imply \eqref{1.7BZ}.

\section{Regularity of trajectories}  

\setcounter{equation}{0}

\subsection{Lipschitz continuity}
In this part, we first prove the Lipschitz continuity of the map $t\mapsto(n_1,n_2,n_3)(t,\cdot)$ in the $L^2$ distance, stated in (\ref{1.lip}).
 For any $h>0$, we have
\beno \n(t+h, x) - \n(t, x) = h\int_0^1 {\n}_t(t+\tau h, x)\,d\tau. \eeno
Thus 
\beq\label{5.14BZ} 
\| n_i(t+h, x) - n_i(t, x)\|_{L^2} \leq h\int_0^1
\| {n_i}_t(t+\tau h, \cdot)\|_{L^2}\,d\tau \leq h \sqrt{2{\mathcal E}_0}, \quad i=1\sim3.
\eeq 

\subsection{Continuity of derivatives}
We prove the continuity of functions $t\mapsto ({n_1}_t,
{n_2}_t,{n_3}_t)(t,\cdot)$ and $t\mapsto ({n_1}_x, {n_1}_x, {n_1}_x)(t,\cdot)$, as functions with
values in $L^p,\ 1\leq p<2$. (To keep tradition, we use the exponent $p$ here
at the expense of repeating one of our primary variables.) This will
complete the proof of Theorem \ref{1.1thm}.

We first consider the case where the initial data $(\n_0)_x$ and $\n_1$
are smooth with compact support. In this case,
the solution $\n =\n(X,Y)$ remains smooth in $\Omega^+$. Fix a time $\tau>0$. We claim that 
\beq\label{6.1BZ}
{\frac{d}{dt}} \n(t,\cdot)\bigg|_{t=\tau}= \n_t(\tau,\cdot) 
\eeq 
where
\beq\label{6.2BZ} \n_t (\tau,x)  :=  \n_X\,X_t+\n_Y\,Y_t
  =  \frac{p\,\vec{\ell}}{2c}\, \frac{c}{p h_1} +\frac{q \vec{m}}{2c}\,
\frac{c}{qh_2}=\frac{\vec\ell}{2h_1} +\frac{\vec m}{2h_2}\,. 
\eeq 
Notice
that (\ref{6.2BZ}) defines the values of $\n_t(\tau,\cdot)$ at almost
every point $x\in\R$. By the inequality (\ref{1.7BZ}), we obtain
\beq\label{6.3BZ} 
\int_{\mathbb R} \big |\n_t(\tau,x)\big|^2\,dx\leq 2\,
{\mathcal E}(\tau) \leq 2\,{\mathcal E}_0. 
\eeq

To prove (\ref{6.1BZ}), we consider the set \beq\label{5.11BZ}
\Gamma_\tau := \{(X, Y)\, |\, t(X, Y) \leq \tau \}, \eeq and let
$\gamma_\tau$ be its boundary. Let $\ve>0$ be given. There exist
finitely many disjoint intervals $[a_i,\,b_i]\subset \R$,
$i=1,\ldots,N$, with the following property. Call $\mathcal{A}_i, \mathcal{B}_i$ the
points on $\gamma_\tau$ such that $x(\mathcal{A}_i)=a_i$, $x(\mathcal{B}_i)=b_i$.  Then
one has \beq\label{6.4BZ} \min\big\{ h_1(P), h_2(P)\big\} <  2\ve
\eeq at every point $P$ on $\gamma_\tau$ contained in one of the
arcs $\mathcal{A}_i\mathcal{B}_i$, while \beq\label{6.5BZ} h_1(P)>\ve\,,\qquad\qquad
h_2(P)>\ve\,, \eeq for every point $P$ along $\gamma_\tau$, not
contained in any of the arcs $\mathcal{A}_i\mathcal{B}_i$. Call $J := \cup_{1\leq i\leq
N} [a_i, b_i]$, $J'=\R\setminus J$, and notice that, as a function
of the original variables, $\n=\n(t,x)$ is smooth in a neighborhood of
the set $\{\tau\}\times J'$. Using Minkowski's inequality and the
differentiability of $\n$ on $J'$, we can write, for $i=1\sim3$, 
\beq\label{6.6BZ}
\begin{array}{rl}
\lim_{h\to 0}& {\frac{1}{h}}\left(\int_{\R} \Big|
n_i(\tau+h,x)-n_i(\tau,x)-h\,{n_i}_t(\tau, x)\Big|^p
dx\right)^{1/p}\\[4mm]
&\leq \lim_{h\to 0} {\frac{1}{h}}\left(\int_J \Big|
n_i(\tau+h,x)-n_i(\tau,x)\Big|^p dx\right)^{1/p}\\
&\quad+\left(\int_J \big|{n_i}_t(\tau,x)\big|^p\,dx\right)^{1/p}\, .
\end{array}
\eeq We now provide an estimate on the measure of the ``bad" set
$J$: \beq\label{6.7BZ}
\begin{array}{rcl}
\meas(J)&=&\int_J dx=\sum_i\int_{\mathcal{A}_i\mathcal{B}_i} \frac{ph_1}{2}\,dX -\frac{qh_2}{2}\,dY\\[4mm]
&\leq & \frac{2\ve}{1-2\ve} \sum_i \int_{\mathcal{A}_i\mathcal{B}_i}
\frac{p(1-h_1)}{2}\,dX -\frac{q(1-h_2)}{2}\,dY\\[4mm]
&\leq& \frac{4\ve}{1-2\ve}  \int_{\gamma_\tau}
\frac{p(1-h_1)}{4}\,dX -\frac{q(1-h_2)}{4}\,dY \leq
\frac{4\ve}{1-2\ve}\,{\mathcal E}_0\,.
\end{array}
\eeq Notice that $dt=0$ on $\gamma_\tau$, so the two parts of the
integral are actually equal. Now choose $q=2/(2-p)$ so that ${\frac p
2}+{\frac 1 q}=1$. Using H\"older's inequality with conjugate
exponents $2/p$ and $q$, and recalling (\ref{5.14BZ}), we obtain for any $i=1\sim3$
$$
\begin{array}{rcl}
&&\int_J \Big| n_i(\tau+h,x)-n_i(\tau,x)\Big|^p dx \\
 &&
\leq   \meas(J)^{1/ q}\cdot \left(\int_J \Big| n_i(\tau+h,x)-n_i(\tau,x)\Big|^2 dx\right)^{p/2}\\[4mm]
&&\leq    \meas(J)^{1/ q}  \cdot
\Big(\big\|n_i(\tau+h,\cdot)-n_i(\tau,\cdot)\big\|^2_{L^2}
\Big)^{p/2}\\[4mm]
&&\leq \meas(J)^{1/ q} \cdot \Big(h^2 \big[ 2 {\mathcal E}_0\big]
\Big)^{p/2}.
\end{array}
$$
Therefore, \beq\label{6.8BZ} \begin{split} & \limsup_{h\to 0}
{\frac 1 h}\left(\int_J \Big| n_i(\tau+h,x)-n_i(\tau,x)\Big|^p
dx\right)^{1/p}\\
&\leq [\frac{4\ve}{1-2\ve}\,{\mathcal E}_0]^{1/pq}\cdot \big[ 2 {\mathcal
E}_0 \big]^{1/2}.\end{split} \eeq In a similar way we estimate
$$
\int_J\big|{n_i}_t(\tau,x)\big|^p\,dx\leq \big[\meas(J)\big]^{1/ q}\cdot
\left(\int_J \Big| {n_i}_t(\tau,x)\Big|^2 dx\right)^{p/2},
$$
\beq\label{6.9BZ}
\left(\int_J\big|{n_i}_t(\tau,x)\big|^p\,dx\right)^{1/p}\leq
\meas(J)^{1/ pq}\cdot \big[2 {\mathcal E}_0\big]^{p/2}. \eeq Since
$\ve>0$ is arbitrary, from (\ref{6.6BZ}), (\ref{6.8BZ}) and
(\ref{6.9BZ}) we conclude 
\beq\label{6.10Z} \lim_{h\to 0}~ {\frac 1
h}\left(\int_{\R} \Big| n_i(\tau+h,x)-n_i(\tau,x)-h\,{n_i}_t(\tau, x)\Big|^p
dx\right)^{1/p} =0. \eeq 
The proof of continuity of the map
$t\mapsto {n_i}_t$ is similar. Fix $\ve>0$. Consider the intervals
$[a_i,b_i]$ as before. Since $\n$ is smooth on a neighborhood of
$\{\tau\}\times J'$, it suffices to estimate
$$
\begin{array}{cl}
& \limsup_{h\to 0}\,\int\big|{n_i}_t(\tau+h,x)-{n_i}_t(\tau,x)\big|^p\,dx \\[4mm]
&\leq \limsup_{h\to 0}
\int_J \big|{n_i}_t(\tau+h,x)-{n_i}_t(\tau,x)\big|^p\,dx\\[4mm]
&\leq  \limsup_{h\to 0} \big[\meas(J)\big]^{1/ q}\cdot
\left(\int_J \Big| {n_i}_t(\tau+h,\,x)-{n_i}_t(\tau,x)\Big|^2 dx\right)^{p/2} \\[4mm]
&\leq  \limsup_{h\to 0} \big[\frac{4\ve}{1-2\ve} {\mathcal E}_0\big]^{1/
q}\cdot \Big(\big\|{n_i}_t(\tau+h,\cdot)\big\|_{L^2}
+\big\|{n_i}_t(\tau,\cdot)\big\|_{L^2}\Big)^p\\[4mm]
&\leq  \big[\frac{4\ve}{1-2\ve} {\mathcal E}_0]^{1/q}\, \big[
4{\mathcal E}_0\big]^p.
\end{array}
$$
Since $\ve>0$ is arbitrary, this proves continuity.

To extend the result to general initial data, such that $(\n_0)_x,\ 
{\n}_1=\n_t|_{t=0} \in L^2$, we use Corollary \ref{col1} and consider
a sequence of smooth initial data, with $({\n}^\nu_0)_x, {\n}^\nu_1  \in \C^\infty_c$, 
with ${\n}_0^\nu\to {\n}_0$ uniformly, $({\n}_0^\nu)_x \to
(\n_0)_x$ almost everywhere and in $L^2$, $\n_1^\nu 
\to \n_1$ almost everywhere and in $L^2$.

The continuity of the function $t\mapsto \n_x(t,\cdot)$
as maps with values in $L^p$, $1\leq p <2$, is proved in an
entirely similar way.
\bigskip

\noindent {\bf Acknowledgments.} Yuxi Zheng is partially supported by
NSF DMS 0908207.
%
\bigskip

\end{document}